\theoremstyle{plain} 
\newtheorem{theorem}{Theorem}[section] 
\newtheorem{lemma}[theorem]{Lemma} 
\newtheorem{corollary}[theorem]{Corollary}
\theoremstyle{definition}
\newtheorem{definition}{Definition} 
\theoremstyle{remark} 
\newtheorem{remark}[theorem]{Remark} 
\newtheorem{claim}[theorem]{Claim} 
\newtheorem{convention}[theorem]{Convention}
\newtheorem*{acknowledgements}{Acknowledgements} 
\numberwithin{equation}{section}
\numberwithin{figure}{section}
\newcommand{\bd}{\begin{description}}   
\newcommand{\ed}{\end{description}} 
\newcommand{\ba}{\begin{array}}      \newcommand{\ea}{\end{array}} 
\newcommand{\bc}{\begin{center}}     \newcommand{\ec}{\end{center}} 
\newcommand{\be}{\begin{enumerate}}  \newcommand{\ee}{\end{enumerate}} 
\newcommand{\beq}{\begin{eqnarray}}  \newcommand{\eeq}{\end{eqnarray}} 
\newcommand{\beQ}{\begin{eqnarray*}} \newcommand{\eeQ}{\end{eqnarray*}} 
\newcommand{\bi}{\begin{itemize}}    \newcommand{\ei}{\end{itemize}}
\newcommand{\ov}{\overline} 
\newcommand{\ve}{\varepsilon} 
\newcommand{\la}{\lambda} 
\newcommand{\1}{\mathbf{1}}
\newcommand{\n}{ \{ 1,...,n \} }
\newcommand{\nbpt}{18}
\newcommand{\figtotext}[3]{\begin{array}{c}\includegraphics{#3}\end{array}}
\newcommand{\Over}{\figtotext{\nbpt}{\nbpt}{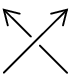}}
\newcommand{\under}{\figtotext{\nbpt}{\nbpt}{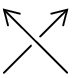}}
\newcommand{\smooth}{\figtotext{\nbpt}{\nbpt}{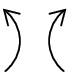}}
\newcommand{\si}{\sigma}
\begin{document} 
\title{Milnor invariants and the HOMFLYPT Polynomial} 

\author[J.B. Meilhan]{Jean-Baptiste Meilhan} 
\address{Institut Fourier, Universit\'e Grenoble 1 \\
         100 rue des Maths - BP 74\\
         38402 St Martin d'H\`eres , France}
	 \email{jean-baptiste.meilhan@ujf-grenoble.fr}
\author[A. Yasuhara]{Akira Yasuhara} 
\address{Tokyo Gakugei University\\
         Department of Mathematics\\
         Koganeishi \\
         Tokyo 184-8501, Japan}
	 \email{yasuhara@u-gakugei.ac.jp}

\thanks{The first author is partially supported by 
the JSPS Invitation Fellowship for Research in Japan (Short Term) ($\#$S-10127). 
The second author is partially supported by a JSPS Grant-in-Aid for Scientific Research (C) 
($\#$20540065). }  

%
%\subjclass[2000]{57M25, 57M27}
%
%
\begin{abstract} 
We give formulas expressing Milnor invariants of an $n$-component link $L$ in the 3-sphere in terms of the HOMFLYPT polynomial as follows. 
If the Milnor invariant $\ov{\mu}_J(L)$ vanishes for any sequence $J$ with length at most $k$, 
then any Milnor $\ov{\mu}$-invariant $\ov{\mu}_I(L)$ with length between $3$ and $2k+1$ 
can be represented as a combination of HOMFLYPT polynomial of knots obtained from the link by certain band sum operations.
In particular, the \lq first non vanishing' Milnor invariants can be always represented as such a linear combination. 
\end{abstract} 
\maketitle
\section{Introduction}\label{sec:intro}
J. Milnor defined in \cite{Milnor,Milnor2} a family of link invariants, known as \emph{Milnor $\ov{\mu}$-invariants}. 
Here, and throughout the paper, by a link we mean an oriented, ordered link in $S^3$.  
Roughly speaking, Milnor invariants encode the behaviour of parallel copies of each link component 
in the lower central series of the link group.  
Given an $n$-component link $L$ in $S^3$, Milnor invariants are specified by a sequence $I$ of (possibly repeating) indices from $\n$. 
The length of the sequence is called the \emph{length} of the Milnor invariant. 
It is known that Milnor invariants of length two are just linking numbers. 
However in general, Milnor invariant $\ov{\mu}_L(I)$ is only well-defined 
modulo the greatest common divisor $\Delta_L(I)$ of all Milnor invariants $\ov{\mu}_L(J)$ such that $J$ is obtained from 
$I$ by removing at least one index and permuting the remaining indices cyclicly.  
This indeterminacy comes from the choice of the meridian curves generating the link group.  
Equivalently, it comes from the indeterminacy of representing the link as the closure of a string link \cite{HL}. %, 
See Section \ref{sec:milnor} for the definitions. 

Recall that the HOMFLYPT polynomial of a knot $K$ is of the form $P(K;t,z)=\sum_{k=0}^N P_{2k}(K;t)z^{2k}$, 
and denote by $P_{0}^{(l)}(K)$ the $l$th derivative of $P_{0}(K;t)\in {\Bbb Z}[t^{\pm 1}]$ evaluated at $t=1$.  
Let us denote by $(\log P_0(K))^{(n)}$ the $n$th derivative of $\log P_0(K;t)$ evaluated at $t=1$.  
Since $P_0(K;1)=1$, we have 
 $$ (\log P_0 )^{(n)}=  P^{(n)}_0 + \sum_{k_1+...+k_{m}=n} n_{(k_1,...,k_m)} P^{(k_1)}_0...P^{(k_m)}_0, $$ 
where the sum runs over all $k_1,...,k_m$ such that $k_1+\cdots+k_{m}=n~(k_i\geq 2)$, and where $n_{(k_1,...,k_m)}\in \mathbb{Z}$.  
For example, one can check that $(\log P_0)^{(n)}=P^{(n)}_0$ for $n=1,2,3$, and that 
$(\log P_0)^{(4)}=P^{(4)}_0 - 3 (P^{(2)}_0)^2$.  

In this paper, we show the following.  
If Milnor invariant $\ov{\mu}_J(L)$ vanishes for any sequence $J$ with length at most $k$, 
then any Milnor $\ov{\mu}$-invariant $\ov{\mu}_I(L)$ with length $m+1~(3\leq m+1\leq 2k+1)$ is given by a 
linear combination 
of $(\log P_{0})^{(m)}$ invariants of knots obtained from the link by certain band sum operations. 

For simplicity, we first state the formula for Milnor link-homotopy invariants $\ov{\mu}(I)$, 
\emph{i.e.} such that the sequence $I$ has no repeated index.  
Let $L=\bigcup_{i=1}^n L_i$ be an 
$n$-component link in $S^3$.
Let $I=i_1i_2...i_{m}$ be a sequence of $m$ distinct elements of $\{1,...,n\}$.  
Let $B_I$ be an oriented $(2m)$-gon, and denote by $p_j~(j=1,...,m)$ a set of $m$ non-adjacent edges 
of $B_I$ according to the boundary orientation. 
Suppose that $B_I$ is embedded in $S^3$ such that $B_I\cap L=\bigcup_{j=1}^{m} p_j$, and 
such that each $p_j$ is contained in $L_{i_j}$ %the ${i_j}$th component of $L$ 
with opposite orientation. 
We call such a disk an \emph{I-fusion disk} for $L$.  
For any subsequence $J$ of $I$, 
we define the oriented knot $L_J$ as the closure of  
$\left( (\bigcup_{i\in \{J\}} L_i)\cup \partial B_I \right)\setminus \left( (\bigcup_{i\in \{J\}} L_i)\cap B_I \right)$,  
where $\{J\}$ is the subset of $\n$ formed by all indices appearing in the sequence $J$. 

\begin{theorem}\label{thm:main}
Let $L$ be an $n$-component link in $S^3$ ($n\ge 3$) with vanishing 
Milnor link-homotopy invariants of length up to $k$.   
Then for any sequence $I$ of $(m+1)$ distinct elements of $\{1,...,n\}$ $(3\leq m+1\leq 2k+1)$ and 
for any $I$-fusion disk for $L$, we have 
\[ \overline{\mu}_L(I) \equiv \frac{-1}{m!2^{m}}\sum_{J<I}(-1)^{|J|} (\log P_0(L_J))^{(m)} \textrm{ (mod $\Delta_L(I)$)}, \]
where the sum runs over all subsequences $J$ of $I$, and where $|J|$ denotes the length of the sequence $J$.
\end{theorem}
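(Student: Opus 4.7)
The plan is to work in the string-link setting, where Milnor invariants become integer-valued, and to exploit the fact that $(\log P_0)^{(m)}$ is a \emph{primitive} Vassiliev invariant of degree $m$ — primitive because $\log P_0$ is additive under connected sum.

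First, I would invoke the Habegger--Lin correspondence to lift $L$ to a string link $\sigma$, eliminating the $\Delta_L(I)$ indeterminacy and placing the target identity in $\mathbb{Z}$. Under the hypothesis that all Milnor link-homotopy invariants of length $\le k$ vanish, $\overline{\mu}_L(I)$ of length $m+1 \le 2k+1$ is a link-homotopy finite-type invariant of degree exactly $m$, whose leading weight system is the standard tree (caterpillar) diagram associated to the sequence $I$.

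Next, I would examine the right-hand side $F(L) := -\tfrac{1}{m!2^{m}}\sum_{J<I}(-1)^{|J|}(\log P_0(L_J))^{(m)}$. Since $(\log P_0)^{(m)}$ is a primitive Vassiliev invariant of degree $m$ and the band-sum operation $L\mapsto L_J$ attaches bands from the fusion disk $B_I$ multilinearly, $F$ is a finite-type invariant of $(L,B_I)$ of degree at most $m$. The alternating sum $\sum_{J<I}(-1)^{|J|}$ plays the role of a Möbius inversion: it annihilates any contribution to $(\log P_0(L_J))^{(m)}$ that depends on only a strict subsequence of $I$, so only genuine $I$-interactions survive.

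Third, both sides are finite-type link-homotopy invariants of degree $m$ with matching behaviour on links whose Milnor invariants of length $\le m$ vanish. By linearity in the associated graded of the Vassiliev filtration, it suffices to verify the formula on a single generator of the relevant graded piece — for instance, the standard Milnor/Brunnian link realising $\overline{\mu}(I)=1$, such as an iterated Bing double. On this model, $\overline{\mu}_L(I)$ is known, while $F(L)$ can be computed by repeated application of the HOMFLYPT skein relation on the simple local tangle produced by the Brunnian structure. The normalisation $-\tfrac{1}{m!2^{m}}$ then emerges as the value of the weight system of $(\log P_0)^{(m)}$ on the caterpillar diagram associated to $I$.

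The main obstacle is this final weight-system calculation: identifying precisely which tree Jacobi diagram is paired by $(\log P_0)^{(m)}$, and verifying that the sign-weighted sum over subsequences $J$ matches the combinatorial pairing that extracts $\overline{\mu}_L(I)$. Primitivity of $\log P_0$ is indispensable here, since the fusion disk $B_I$ effectively turns the components indexed by $J$ into connected summands of a single knot $L_J$, and it is precisely the additivity of $\log P_0$ under connected sum that allows the alternating sum to telescope to the clean Milnor-invariant expression modulo $\Delta_L(I)$.
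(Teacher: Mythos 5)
Your proposal breaks down at two specific points. First, lifting to a string link does not place the identity in $\mathbb{Z}$: for $k<m\leq 2k$ the indeterminacy mod $\Delta_L(I)$ is genuine, and the integer-valued statement holds only in the first non-vanishing case $m=k$ (that is Theorem \ref{thm:main2}, not Theorem \ref{thm:main}). In the paper's argument the string-link lift $\sigma$ of $L$ is compared, via Yasuhara's explicit link-homotopy normal form $l_k\cdots l_{n-1}$ built from the claspers $V_J^{x_J}$, and the whole difficulty is that the correction terms coming from this comparison are \emph{not} zero: the repeated trees die in the alternating sum (your ``M\"obius inversion'' point, which is fine), but the remaining contributions are parallel claspers of multiplicity $\Delta_L(I)$ whose $(\log P_0)^{(m)}$-values are only shown to be divisible by $m!2^m$ (Lemma \ref{lem:homfly_planar} together with Lemma \ref{lem:Pnonplanar}), hence vanish only modulo $\Delta_L(I)$ after dividing by the normalisation. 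This divisibility bookkeeping is exactly what fails in the example at the end of the paper, and it is invisible in your outline.

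Second, the step ``both sides are finite-type of degree $m$, so it suffices to check one generator of the graded piece'' is not a valid argument as stated. Two degree-$m$ invariants agreeing on a single top-graded element (or even having equal symbols) can still differ by lower-degree terms, and here the left-hand side is not even a well-defined integer invariant on the class you work with (only a residue mod $\Delta_L(I)$), while the right-hand side depends on the fusion disk $B_I$, so neither side sits cleanly in a Vassiliev-theoretic associated graded where such an interpolation could be run. You would at minimum need a spanning/normal-form statement for the relevant quotient of links with vanishing Milnor invariants up to length $k$, together with control of all cross terms; the paper supplies precisely this via Theorem \ref{thm:repr} and the parallel-clasper calculus of Lemma \ref{lem:parallel}, and it is there that the hypothesis $m+1\leq 2k+1$ is actually used (interactions between claspers of degree $\geq k$ produce only repeated or high-degree trees, Claim \ref{fact}). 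Your proposal never uses this hypothesis concretely, yet without it the statement is false, so any argument insensitive to it cannot be correct. Your model computation on a Brunnian link and the role of primitivity of $\log P_0$ are in the right spirit (the paper's Lemma \ref{lem:homfly_planar} plays that role, and Lemma \ref{lem:Pnonplanar} is additionally needed to discard the non-planar closures), but they do not substitute for the two missing ingredients above.
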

This generalizes widely a result of M. Polyak for Milnor's triple linking number $\ov{\mu}(123)$ \cite{P}. 
There are several other known results relating Milnor invariants of (string) links 
to the Alexander polynomial, for example see \cite{C,L,MV,M,Mu,T} 
(note in particular that the results of \cite{L,MV,M,P} make use of closure-type operations). 
Relation to quantum invariants are also known via the Kontsevich integral \cite{HM}. 

We emphasize that our assumption that the link has vanishing Milnor link-homotopy invariants of length up to $k$ is 
essential in order to compute its Milnor invariants of length up to $2k+1$ using our formula. 
See the example at the end of this paper, which shows that Milnor link-homotopy invariants of length 
4 are not given by the formula in Theorem~\ref{thm:main} if there are nonvanishing linking numbers. 

As noted above, only the first non vanishing Milnor invariants of a link are well-defined integer-valued invariants.  
The following is in some sense a refinement of Theorem \ref{thm:main} for the first non vanishing Milnor invariants. 

\begin{theorem}\label{thm:main2}
Let $L$ be an $n$-component link in $S^3$ ($n\ge 3$) with vanishing 
Milnor link-homotopy invariants of length up to $k(\geq 2)$.  
Then for any sequence $I$ of $(k+1)$ distinct elements of $\{1,...,n\}$ and 
for any $I$-fusion disk for $L$, we have 
\[ \overline{\mu}_L(I) = \frac{-1}{k!2^{k}}\sum_{J<I}(-1)^{|J|} P_0^{(k)}(L_J)\in{\mathbb Z}. \]
\end{theorem}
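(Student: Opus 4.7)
The plan is to deduce Theorem~\ref{thm:main2} from Theorem~\ref{thm:main} via two reductions: first killing the indeterminacy $\Delta_L(I)$, then replacing $(\log P_0)^{(k)}$ by $P_0^{(k)}$ inside the alternating sum. Applying Theorem~\ref{thm:main} with $m=k$ gives
$$\overline{\mu}_L(I) \equiv \frac{-1}{k!2^{k}}\sum_{J<I}(-1)^{|J|}(\log P_0(L_J))^{(k)} \pmod{\Delta_L(I)}.$$
Since $|I|=k+1$ and the indices of $I$ are distinct, every sequence obtained from $I$ by deleting at least one index and cyclically permuting is itself a link-homotopy sequence of length at most $k$; by hypothesis its Milnor invariant vanishes, so $\Delta_L(I)=0$ and the congruence is a genuine equality in $\mathbb{Z}$.

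Next, using the identity recalled in the introduction,
$$(\log P_0)^{(k)} = P_0^{(k)} + \sum_{\ell\geq 2,~k_1+\cdots+k_\ell=k,~k_i\geq 2} n_{(k_1,\ldots,k_\ell)}P_0^{(k_1)}\cdots P_0^{(k_\ell)},$$
the conclusion reduces to showing that for every tuple $(k_1,\ldots,k_\ell)$ with $\ell\geq 2$, $k_i\geq 2$ and $\sum k_i=k$,
$$\sum_{J<I}(-1)^{|J|}\,P_0^{(k_1)}(L_J)\cdots P_0^{(k_\ell)}(L_J)=0. \qquad (\ast)$$

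Assertion $(\ast)$ is the technical heart of the proof. Each $P_0^{(k_i)}$ is a Vassiliev knot invariant of degree $k_i$, so the product $\prod_i P_0^{(k_i)}$ is a degree-$k$ invariant which, being a product of $\ell\geq 2$ factors of positive degree, is \emph{decomposable} in the Hopf algebra of Vassiliev invariants; its associated weight system vanishes on primitive (tree-like) Jacobi diagrams. In parallel, the mechanism underlying Theorem~\ref{thm:main} identifies the alternating sum $\sum_{J<I}(-1)^{|J|}v(L_J)$, for a degree-$k$ Vassiliev knot invariant $v$ applied to an $(k+1)$-component link $L$ with vanishing Milnor link-homotopy invariants of length $\leq k$, with the pairing of the weight system of $v$ against the unique $C_k$-tree clasper that encodes $\overline{\mu}_L(I)$. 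Since that tree is primitive, decomposable invariants pair to zero with it, which yields $(\ast)$.

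The main obstacle will be making this decomposable/primitive cancellation precise in whichever framework is used to prove Theorem~\ref{thm:main}. If that framework is clasper- or Magnus-theoretic rather than Kontsevich-integral-based, I would instead establish $(\ast)$ by induction on $\ell$: the hypothesis controls each $P_0^{(k_i)}(L_J)$ with $k_i<k$, forcing it to split, modulo vanishing Milnor-invariant contributions, into a sum of single-component quantities depending only on the $L_i$ with $i\in J$. The resulting alternating sum then telescopes by a standard inclusion-exclusion argument (using that $|I|=k+1>k\geq k_i$ for each $i$) and vanishes term by term.
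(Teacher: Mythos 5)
Your first two steps are fine: since $I$ consists of $k+1$ distinct indices, every sequence obtained from $I$ by deleting indices and permuting cyclically is a nonrepeated sequence of length at most $k$, so the hypothesis gives $\Delta_L(I)=0$ and Theorem~\ref{thm:main} with $m=k$ yields a genuine equality involving $(\log P_0)^{(k)}$. The gap is your assertion $(\ast)$, which is where all the content lies, and neither of your sketches establishes it. First, $(\ast)$ is not a soft consequence of degree counting: the knots $L_J$ are in general not $C_{k_i+1}$-equivalent to the unknot, so the factors $P_0^{(k_i)}(L_J)$ need not vanish individually. Second, the proof of Theorem~\ref{thm:main} (Lemma~\ref{lem:claim}) is tied to the additivity of $\log P_0$ under connected sum; it does not provide, for a non-additive degree-$k$ invariant such as $v=\prod_i P_0^{(k_i)}$, the identification of $\sum_{J<I}(-1)^{|J|}v(L_J)$ with a pairing of the weight system of $v$ against a single $C_k$-tree --- proving such an identification for all degree-$k$ invariants is essentially equivalent to what has to be shown, and it must use the vanishing hypothesis (the example in Section~\ref{sec:ex} shows the alternating sum is not controlled by such a pairing in general). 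Your fallback argument is also incorrect as stated: $P_0^{(k_i)}(L_J)$ does not split into single-component quantities; already for $k_i=2$ it contains two-component interaction terms of Sato--Levine type, i.e.\ Milnor invariants with repeated indices, which are not assumed to vanish, so the proposed telescoping has no basis.

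Your reduction can in fact be completed, but only by importing the analysis that constitutes the paper's actual proof, which does not pass through Theorem~\ref{thm:main} at all. Using the clasper representative, $\sigma$ is link-homotopic to $l_{n-1}$, and after the localization procedure one gets $L_J\stackrel{C_n}{\sim} l_{n-1}(J)\sharp U_{R_J}$ for a weighted union $R$ of repeated trees. Since each $k_i\le k-2$ and $l_{n-1}(J)$ is $C_k$-equivalent to the unknot, multiplicativity of $P_0$ gives $P_0^{(k_i)}(L_J)=P_0^{(k_i)}(U_{R_J})$, and then $(\ast)$ follows from Claim~\ref{claim:stringR}: the alternating sum $\sum_{J<I}(-1)^{|J|}U_{R_J}$ is a linear combination of knots with $n$ double points, on which any finite type invariant of degree $k=n-1$, additive or not, vanishes. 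The paper runs this localization and singular-knot argument directly for $P_0^{(k)}$ (using Equation~(\ref{eq:nadditivity}) and Equation~(\ref{eq:pVI})), thereby bypassing the $\log$ expansion and your step $(\ast)$ entirely; without that machinery, $(\ast)$ remains unproved in your proposal.
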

Theorem~\ref{thm:main2} implies that all Milnor link-homotopy invariants of a link $L$ vanish if and only if 
all linking numbers of $L$ vanish and 
$\sum_{J<I}(-1)^{|J|} P_0^{(k)}(L_J)=0$
for all $k~(2\leq k\leq n-1)$ and for all nonrepeated sequences $I$ of length $k+1$.

Theorems~\ref{thm:main} and \ref{thm:main2} generalize as follows. 
Let $L=\bigcup_{i=1}^n L_i$ be an $n$-component link in $S^3$, and 
let $I=i_1 i_2 \cdots i_m$ be a sequence of $m$ elements of $\{1,...,n\}$, where each 
element $i$ appears exactly $r_i$ times.  
Denote by $D_I(L)$ the $m$-component link obtained from $L$ as follows. 
\begin{itemize}
\item Replace each string $L_i$ by $r_i$ zero-framed parallel copies of it, labeled 
from $L_{(i,1)}$ to $L_{(i,r_i)}$.  
If $r_i=0$ for some index $i$, simply delete $L_i$.  
\item 
Let $D_I(L)=L'_1\cup\cdots\cup L'_m$ be the $m$-string link $\bigcup_{i,j} L_{(i,j)}$ 
with the order induced by the lexicographic order of the index $(i,j)$.  
This ordering defines a bijection $\varphi:\{(i,j)~|~1\leq i\leq n, 1\leq j\leq r_i\}\rightarrow\{1,...,m\}$.   
\end{itemize}
We also define a sequence $D(I)$ of elements of $\{1,...,m\}$ without repeated index as follows.
First, consider a sequence of elements of $\{ (i,j) ; 1\leq i\leq n, 1\leq j\leq r_i \}$ by replacing each $i$ in $I$ with $(i,1),...,(i,r_i)$
in this order.  For example if $I=12231$, we obtain the sequence $(1,1),(2,1),(2,2),(3,1),(1,2)$.  Next replace each term $(i,j)$ of this sequence with $\varphi((i,j))$.  
Hence we have $D(12231)=13452$.  

\begin{theorem}\label{thm:general}
Let $L$ be an $n$-component link in $S^3$ with vanishing Milnor invariants of length up to $k$.   
Let $I$ be a sequence of $(m+1)$ possibly repeating elements of $\{1,...,n\}$ $(3\leq m+1\leq 2k+1)$.   
\begin{enumerate}
 \item[(i)] For any $D(I)$-fusion disk for $D_I(L)$, we have 
\[ \ov{\mu}_L(I) \equiv \frac{-1}{m!2^{m}}\sum_{J< D(I)}(-1)^{|J|}(\log  P_0(D_I(L)_J))^{(m)} \textrm{ (mod $\Delta_L(I)$)}. \]
 \item[(ii)] If $m=k(\geq 2)$, then for any $D(I)$-fusion disk for $D_I(L)$, we have 
\[ \ov{\mu}_L(I) = \frac{-1}{k!2^{k}}\sum_{J< D(I)}(-1)^{|J|}P_0^{(k)}(D_I(L)_J) \in \mathbb{Z}. \]
\end{enumerate}
\end{theorem}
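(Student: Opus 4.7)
The plan is to reduce Theorem~\ref{thm:general} to the non-repeating cases already handled by Theorems~\ref{thm:main} and \ref{thm:main2}, via the cabling operation $L \mapsto D_I(L)$. The construction of $D_I(L)$ together with the relabeled non-repeating sequence $D(I)$ is designed precisely so that Milnor invariants of $L$ indexed by the possibly repeating sequence $I$ correspond to Milnor link-homotopy invariants of $D_I(L)$ indexed by $D(I)$.

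The first step is to verify that $D_I(L)$ satisfies the hypothesis of Theorem~\ref{thm:main}: its Milnor link-homotopy invariants of length up to $k$ all vanish. Indeed, any such invariant of $D_I(L)$ on a non-repeating sequence $J'$ in $\{1,\dots,m\}$ of length $\le k$ agrees, via the classical cabling--Milnor correspondence, with some $\ov{\mu}_L(J)$ on a sequence $J$ of length $\le k$ of possibly repeating indices in $\{1,\dots,n\}$; this vanishes by assumption. The second step is to invoke the classical identity $\ov{\mu}_L(I) = \ov{\mu}_{D_I(L)}(D(I))$ (modulo the natural comparison of indeterminacies), which is exactly Milnor's original device for reducing repeated-index invariants to link-homotopy invariants of a cabling. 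The third step is to apply Theorem~\ref{thm:main} (resp.\ Theorem~\ref{thm:main2}) to $D_I(L)$ with the $D(I)$-fusion disk: since such a fusion disk produces precisely the band-sum knots $D_I(L)_J$ entering the statement, this directly yields the formulas of parts (i) and (ii) for $\ov{\mu}_{D_I(L)}(D(I))$, which by Step~2 equals $\ov{\mu}_L(I)$.

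The main obstacle is the comparison of indeterminacies in part (i). Theorem~\ref{thm:main} applied to $D_I(L)$ produces a congruence modulo $\Delta_{D_I(L)}(D(I))$, whereas the desired congruence is modulo $\Delta_L(I)$; this requires $\Delta_L(I)$ to divide $\Delta_{D_I(L)}(D(I))$. The way to obtain this is combinatorial: each generator $\ov{\mu}_{D_I(L)}(J')$ of $\Delta_{D_I(L)}(D(I))$, where $J'$ is a proper subsequence of $D(I)$ up to cyclic permutation, transports via $\varphi^{-1}$ (forgetting the parallel-copy coordinate) to a proper subsequence $J$ of $I$ with possibly repeated indices, and the cabling correspondence gives $\ov{\mu}_{D_I(L)}(J') = \ov{\mu}_L(J)$. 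Every generator of $\Delta_{D_I(L)}(D(I))$ is thus itself a generator of $\Delta_L(I)$, and the required divisibility follows. For part (ii) no comparison is needed: we are at the lowest non-vanishing degree, both $\ov{\mu}_L(I)$ and $\ov{\mu}_{D_I(L)}(D(I))$ are genuine integers, and Theorem~\ref{thm:main2} applies directly.
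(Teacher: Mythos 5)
Your proposal is correct and follows essentially the same route as the paper: the authors also deduce Theorem~\ref{thm:general} directly from Theorems~\ref{thm:main} and \ref{thm:main2} applied to the cable $D_I(L)$ with the non-repeating sequence $D(I)$, invoking Milnor's cabling theorem (\cite[Thm.~7]{Milnor2}) both for $\ov{\mu}_{D_I(L)}(D(I))=\ov{\mu}_L(I)$ and for the comparison of indeterminacies (the paper states the full equality $\Delta_L(I)=\Delta_{D_I(L)}(D(I))$, while you verify only the divisibility actually needed). Your explicit check that $D_I(L)$ inherits the vanishing hypothesis is a detail the paper leaves implicit, but it is the same argument.
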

Theorem~\ref{thm:general} follows directly from Theorems~\ref{thm:main} and \ref{thm:main2}, 
since Milnor proved in \cite[Thm. 7]{Milnor2} that 
 $\ov{\mu}_{D_I(L)}(D(I))=\ov{\mu}_{L}(I)$  
(note that $\Delta_L(I)=\Delta_{D_I(L)}(D(I))$, again as a consequence of \cite[Thm. 7]{Milnor2}).

This implies that all Milnor invariants of a link $L$ vanish if and only if 
all linking numbers of $L$ are zero and 
$\sum_{J< D(I)}(-1)^{|J|}P_0^{(k)}(D_I(L)_J)=0$
for all $k(\geq 2)$ and for all sequences $I$ with length $k+1$.

The rest of the paper is organized as follows. 
In Section 2, we review some elements of the theory of claspers, which is the main tool in proving our main results.
In Section 3, we recall some properties of the HOMFLYPT polynomial of knots.  
In Section 4, we review Milnor invariants and string links, and give a few lemmas. 
Section 5 is devoted to the proof of Theorem~\ref{thm:main}. 
In Section 6, we prove Theorem~\ref{thm:main2} and show, as a consequence, how to use the HOMFLYPT polynomial 
to distinguish string links up to link-homotopy. 
The paper is concluded by a simple example which illustrates the necessity of the assumptions required in our results. 

\begin{convention}
In this paper, given a sequence $I$ of elements of $\{1,...,n\}$, the notation 
$J<I$ will be used for any subsequence $J$ of $I$, possibly empty or equal to $I$ itself. 
By $J\lneq I$, we mean any subsequence $J$ of $I$ that is not $I$ itself. 
We will use the notation $\{I\}$ for the subset of $\n$ formed by all indices appearing in the sequence $I$, and 
$|I|$ will denote the length of the sequence $I$.   
\end{convention}

\begin{acknowledgements}
The authors thank 
Kazuo Habiro for useful discussions.
\end{acknowledgements}

\section{Some elements of clasper theory} \label{sec:claspers}
The primary tool in the proofs of our results is the theory of claspers.  
We recall here the main definitions and properties of this theory
that will be useful in subsequent sections.  
For a general definition of claspers, we refer the reader to \cite{H}.  

\begin{definition}\label{defclasp}
Let $L$ be a (string) link.  
A surface $G$ embedded in $S^3$ (or $D^2\times [0,1]$) is called a {\em graph clasper} for $L$ if it satisfies the following three conditions:
\be 
\item $G$ is decomposed into disks and bands, called {\em edges}, each of which connects two distinct disks.
\item The disks have either 1 or 3 incident edges, and are called {\em leaves} or {\em vertices} respectively.
\item $G$ intersects $L$ transversely, and the intersections are contained in the union of the interiors of the leaves. 
\ee
In particular, if a graph clasper $G$ is a disk, we call it a \emph{tree clasper}.  
\end{definition}

Throughout this paper, the drawing convention for claspers are those of \cite[Fig.~7]{H}, unless otherwise specified.  

The degree of a connected graph clasper $G$ is defined as half of the number of vertices and leaves.  
A tree clasper of degree $k$ is called a \emph{$C_k$-tree}.  
Note that a $C_k$-tree has exactly $(k+1)$ leaves.

A graph clasper for a (string) link $L$ is \emph{simple} if each of its leaves intersects $L$ at exactly one point.   

Let $G$ be a simple graph clasper for an $n$-component (string) link $L$.  
The \emph{index} of $G$ is the collection of all integers $i$ such that $G$ intersects the $i$th component of $L$. 
For example, if $G$ intersects component $3$ twice and components $2$ and $5$ once, and is disjoint from all other components of $L$, then its index is $\{2,3,5\}$.

Given a graph clasper $G$ for a (string) link, there is a procedure to construct a framed link, in a regular neighbourhood of $G$.  There is thus a notion of \emph{surgery along $G$}, which is defined as surgery along the corresponding framed link.  
In particular, surgery along a simple $C_k$-tree is a local move as illustrated in Figure \ref{ckmove}. 
\begin{figure}[!h]
\includegraphics{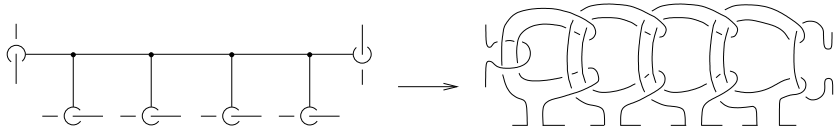}
\caption{Surgery along a simple $C_5$-tree.} \label{ckmove}
\end{figure}

The $C_k$-equivalence is the equivalence relation on (string) links generated by surgeries 
along connected graph claspers of degree $k$ and isotopies.  
Alternatively, the $C_k$-equivalence can be defined in term of ``insertion'' of elements of the 
$k$th term of the lower central series of the pure braid group \cite{stanford}.
We use the notation $L \stackrel{C_{k}}{\sim} L'$ for $C_k$-equivalent (string) links $L$ and $L'$.  

It is known that the $C_k$-equivalence becomes finer as $k$ increases, and that 
two links are $C_k$-equivalent if and only if they are related by surgery along simple $C_k$-trees \cite{H}.
Moreover, it was shown by Goussarov and Habiro that this equivalence relation characterizes the topological information 
carried by finite type knot invariants. More precisely, it is shown in \cite{G,H} that two knots are 
$C_k$-equivalent if and only if they cannot be distinguished by any finite type invariant of degree $<k$.  

\subsection{Linear trees and planarity}

For $k\ge 3$, a $C_k$-tree $G$ having the shape of the tree clasper in Figure \ref{ckmove} is called a \emph{linear} $C_k$-tree. 
The left-most and right-most leaves of $G$ in Figure \ref{ckmove} are called the \emph{ends} of $G$.

Now suppose that the $G$ is a linear $C_k$-tree for some knot $K$, 
and denote its ends by $f$ and $f'$.  
Then the remaining $(k-1)$ leaves of $G$ can be labelled from $1$ to $(k-1)$, by 
travelling along the boundary of the disk\footnote{Recall that a clasper is an embedded surface: in particular, 
since $T$ is a tree clasper, the underlying surface is homeomorphic to a disk.  } $G$ from $f$ to $f'$ so that all leaves are visited.  
We say that $G$ is \emph{planar} if, when travelling along $K$ from $f$ to $f'$, either following or against the orientation, the labels of the leaves met successively are strictly increasing.  
   \begin{figure}[!h]
    \includegraphics{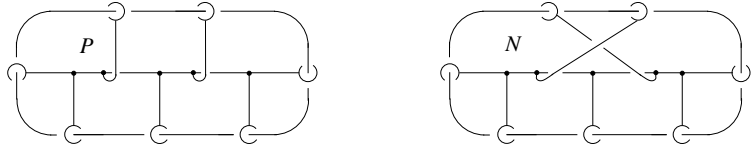}
   \caption{The $C_6$-tree $P$ is planar for the unknot, while $N$ is nonplanar.  }\label{fig:planar}
   \end{figure}
See Figure \ref{fig:planar} for an example.  
\subsection{Calculus of Claspers for parallel claspers}
We shall need refinements of Propositions 4.4 and 4.6 of \cite{H} for parallel tree claspers. 
 
Here, by \emph{parallel} tree claspers we mean 
a family of $m$ parallel copies of a tree clasper $T$, for some $m\ge 1$.  
We call $m$ the \emph{multiplicity} of the parallel clasper.    
Note that there is no ambiguity in the notion of parallel copies here, since for a tree clasper the underlying surface is homeomorphic to a disk. 

\begin{lemma}\label{lem:parallel}
Let $m, k,k'\ge 1$ be integers. 
Let $T$ be a parallel $C_k$-tree with multiplicity $m$ for a (string) link $L$, and let $T'$ be a $C_{k'}$-tree for $L$, disjoint from $T$. 
\begin{enumerate}
 \item Let $\tilde{T}\cup \tilde{T'}$ be obtained from $T\cup T'$ by sliding a leaf $f'$ of $T'$ over $m$ parallel leaves of $T$ (see Figure~\ref{fig:slide}(a)). 
   Then $L_{T\cup T'}$ is ambient isotopic to $L_{\tilde{T}\cup \tilde{T'}\cup Y\cup C}$, where 
   $Y$ denotes the parallel $C_{k+k'}$-tree with multiplicity $m$ obtained by inserting a vertex $v$ in the edge $e$ of $T$  
   and connecting $v$ to the edge incident to $f'$ as shown in Figure \ref{fig:slide}~(a), 
   and where $C$ is a disjoint union of $C_{k+k'+1}$-trees for $L$. 
 \item Let $\tilde{T}\cup \tilde{T'}$ be obtained from $T\cup T'$ by passing an edge of $T'$ across $m$ parallel edges of $T$ (see Figure~\ref{fig:slide}(b)).  
   Then $L_{T\cup T'}$ is ambient isotopic to $L_{\tilde{T}\cup \tilde{T'}\cup H\cup C}$, where 
   $H$ denotes the parallel $C_{k+k'+1}$-tree with multiplicity $m$ obtained by 
   inserting a vertex $v$ in both edges, and connecting them by an edge 
   as shown in Figure~\ref{fig:slide}(b)), and where $C$ is a disjoint union of $C_{k+k'+2}$-trees for $L$. 
 \end{enumerate}
   \begin{figure}[!h]
    \includegraphics{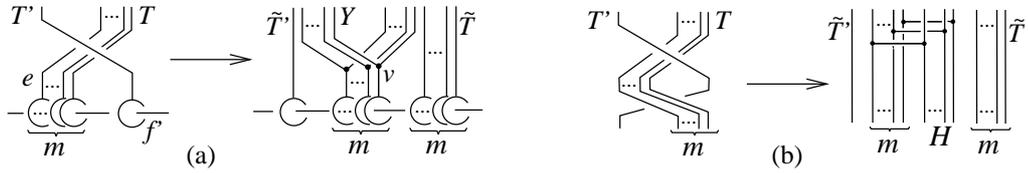}
   \caption{Leaf slide and crossing change involving parallel tree claspers.  }\label{fig:slide}
   \end{figure}
\end{lemma}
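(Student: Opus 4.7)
The plan is to derive both parts by reducing to the single-leaf (resp.\ single-edge) cases established in Propositions 4.4 and 4.6 of \cite{H}, applied iteratively while tracking the parallel structure carefully.

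For part (1), I would argue by induction on the multiplicity $m$. When $m=1$, the statement is precisely Habiro's Proposition 4.4: sliding $f'$ over a single leaf of the $C_k$-tree $T$ yields, up to ambient isotopy, the ``uncrossed'' configuration together with a single $C_{k+k'}$-tree obtained by inserting a vertex on $e$ and joining it to the edge of $T'$ incident to $f'$, modulo $C_{k+k'+1}$-trees. For the inductive step, first slide $f'$ over one of the $m$ parallel leaves of $T$ using the base case; this produces one $C_{k+k'}$-tree $Y_1$ sitting on the first parallel copy of $e$, plus $C_{k+k'+1}$-trees absorbed into $C$. After this first slide, the deformed leaf $f'$ still must be slid over the remaining $m-1$ parallel leaves, which themselves form a parallel $C_k$-tree of multiplicity $m-1$ inside a tubular neighbourhood disjoint from the newly created $Y_1$; the inductive hypothesis then produces the required $Y_2, \ldots, Y_m$ together with further $C_{k+k'+1}$-trees, again absorbed into $C$.

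Part (2) proceeds by exactly the same inductive scheme, using Proposition 4.6 of \cite{H} as the base case. Each individual edge-crossing change across one of the $m$ parallel edges of $T$ produces one $H$-shaped $C_{k+k'+1}$-tree (inserting a vertex on that parallel edge of $T$, a vertex on the edge of $T'$, and joining them), modulo $C_{k+k'+2}$-trees; iterating $m$ times and absorbing errors into $C$ yields the claimed conclusion.

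The main obstacle is verifying that the $m$ new trees produced by the iteration truly form a \emph{parallel} clasper of multiplicity $m$, rather than merely $m$ trees of the same shape and degree. This amounts to checking that, after each step, the next insertion takes place on the parallel copy of the same edge of $T$ and is joined to the same edge of $T'$. I would justify this by carrying out the whole argument in a tubular neighbourhood of the parallel leaves (resp.\ edges) of $T$, where the ambient geometry is a product $D^2\times\{1,\ldots,m\}$ and Habiro's local picture applies slicewise, producing $m$ parallel copies of $Y$ (resp.\ $H$) by construction. Absorbing the higher-degree correction trees into $C$ should be routine, since the degree counts at each step are uniform and $C_k$-equivalence is preserved under disjoint union.
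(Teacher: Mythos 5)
Your proposal is correct and takes essentially the same route as the paper, which simply observes that the case $m=1$ is Habiro's Propositions 4.4 and 4.6 and that the general case follows by running the same arguments for the parallel family; your closing observation—that one should work slicewise in a product neighbourhood of the $m$ parallel leaves (resp.\ edges) so that Habiro's local picture applies to all copies at once, yielding genuinely parallel copies of $Y$ (resp.\ $H$) and corrections of the stated degrees—is exactly the intended argument, and it is in fact cleaner than the copy-by-copy induction, whose bookkeeping (the earlier $Y_i$'s interacting with the remaining slides) it renders unnecessary.
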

\noindent 
This result is well-known for $m=1$. 
The general case is easily proved using the arguments of the proof of \cite[Prop. 4.4]{H} and \cite[Prop. 4.6]{H} respectively.  

\begin{remark}\label{rem:parallel}
Notice that, following the proofs of \cite[Prop. 4.4 and 4.6]{H}, the index of each of the tree claspers involved in Lemma 
\ref{lem:parallel} can be determined from those of $T$ and $T'$ as follows. 
We have that the index of $\tilde{T}$ is equal to the index of $T$, the index of $\tilde{T}'$ is equal to the index of $T'$, and 
the indices of $Y$, $H$ and each connected component of $C$ are equal to the union of the indices of $T$ and $T'$. 
\end{remark}
\section{The HOMFLYPT polynomial}\label{sec:homfly}
In this section, we recall the definition of the HOMFLYPT polynomial, and mention a few useful examples and properties. 

The \emph{HOMFLYPT polynomial} $P(L;t,z)\in {\Bbb Z}[t^{\pm 1},z^{\pm 1}]$ of an oriented link $L$ is defined by the following formulas
\begin{enumerate}
\item $P(U;t,z) = 1$, 
\item $t^{-1}P(L_+;t,z) - tP(L_- ;t,z) = zP(L_0 ;t,z)$, 
\end{enumerate}
where $U$ denotes the unknot and where $L_+$, $L_-$ and $L_0$ are three links that are identical except in a $3$-ball where they look as follows: 
$$L_+=\Over\quad ;\quad L_-=\under\quad ; \quad L_0=\smooth.$$
In particular, the HOMFLYPT polynomial of an $r$-component link $K$ is of the form  
 $$ P(K;t,z)=\sum_{k=1}^N P_{2k-1-r}(K;t)z^{2k-1-r}, $$
where $P_{2k-1-r}(K;t)\in {\Bbb Z}[t^{\pm 1}]$ is called the $(2k-1-r)$th coefficient polynomial of $K$.  
Furthermore, the lowest degree coefficient polynomial of $K$ is given by 
\begin{equation}\label{eq:LM}
 P_{1-r}(K;t)=t^{2Lk(L)}(t^{-1}-t)^{r-1}\prod_{i=1}^r P_0(K_i;t), 
\end{equation}
where $K_i$ denotes the $i$th component of $K$, and where $Lk(L):=\sum_{i<j} lk(L_i,L_j)$, see \cite[Prop. 22]{LM}.

Denote by $P_{k}^{(l)}(L)$ the $l$th derivative of $P_{k}(L;t)$ evaluated at $t=1$.  
It was proved by Kanenobu and Miyazawa that $P_{k}^{(l)}$ is a finite type invariant of degree $k+l$ \cite{KM}.  
In particular, $P_{0}^{(l)}$ is of degree $l$, and thus is an invariant of $C_{l+1}$-equivalence. 

It is well-known that the HOMFLYPT polynomial of knots is multiplicative under connected sum.  
Thus the same holds for the lowest degree coefficient polynomial $P_0$, and in general, for any integer $n$ and any two oriented knots $K$ and $K'$,
we have 
 $$ P_0^{(n)}(K\sharp K') = 
    P_0^{(n)}(K) + P_0^{(n)}(K') + \sum_{k=1}^{n-1} \binom{n}{k} P_0^{(k)}(K)P_0^{(n-k)}(K'). $$
If, moreover, we assume that the knot $K$ is $C_n$-equivalent to the unknot, then we have 
\begin{equation}\label{eq:nadditivity}
 P_0^{(n)}(K\sharp K') = P_0^{(n)}(K) + P_0^{(n)}(K'),
\end{equation}
since $P_0^{(n)}$ is an invariant of $C_{n+1}$-equivalence, for all $k$.   

In general, a simple way to derive an additive knot invariant from the coefficient polynomial $P_0$ is to take its log. 
(Since $P_0(K;t)$ is in ${\Bbb Z}[t^{\pm1}]$ and $P_0(K;1)=1$ for any knot $K$, 
$\log P_0(K;t)$ can be regarded as a smooth function defined on an open interval which contains $1$.)   
Indeed, we have that, for any two oriented knots $K$ and $K'$, 
 $$ (\log P_0)(K\sharp K';t) = (\log P_0)(K;t) + (\log P_0)(K';t). $$
\noindent 

Denote by $(\log P_0(K))^{(n)}$ the $n$th derivative of $\log P_0(K;t)$ evaluated at $t=1$.  
As mentioned in the introduction, $(\log P_0(K))^{(n)}$ is equal to $P_0(K)^{(n)}$ plus a sum of products of $P_0(K)^{(k)}$'s with $k<n$.  
So we see that $(\log P_0)^{(n)}$ is an additive finite type knot invariant of degree $n$, and thus is an invariant of $C_{n+1}$-equivalence.

The following simple example shall be useful later.
\begin{lemma}\label{lem:homfly_planar}
Let $n\ge 1$, and let $\varepsilon=(\varepsilon_0,\varepsilon_1,...,\varepsilon_n,\varepsilon_{n+1})\in \{-1,1\}^{n+2}$. 
Let $K^{\varepsilon}_n$ be the knot represented in Figure \ref{fig:Kn}.  Then 
  $$ \big(\log P_0( K^{\varepsilon}_n )\big)^{(n+1)} = P_0^{(n+1)}( K^{\varepsilon}_n ) = (-1)^n 2^{n+1}(n+1)! \prod_{i=0}^{n+1}\varepsilon_i. $$
\end{lemma}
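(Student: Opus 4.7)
The plan is to split the proof into two essentially independent pieces: a soft argument reducing $(\log P_0(K_n^{\varepsilon}))^{(n+1)}$ to $P_0^{(n+1)}(K_n^{\varepsilon})$, followed by a direct computation of the latter.

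First, I observe from Figure \ref{fig:Kn} that $K_n^{\varepsilon}$ is obtained from the unknot $U$ by surgery along a single simple, planar, linear $C_{n+1}$-tree, the $n+2$ leaves of which carry the signs $\varepsilon_0,\ldots,\varepsilon_{n+1}$. In particular $K_n^{\varepsilon}$ is $C_{n+1}$-equivalent to $U$. Since by Kanenobu--Miyazawa $P_0^{(k)}$ is a finite type invariant of degree $k$, and in particular an invariant of $C_{k+1}$-equivalence, we have $P_0^{(k)}(K_n^{\varepsilon}) = P_0^{(k)}(U) = 0$ for every $1\le k\le n$. In the expansion of $(\log P_0)^{(n+1)}$ recalled in the introduction, every summand besides $P_0^{(n+1)}$ itself is a product $P_0^{(k_1)}\cdots P_0^{(k_m)}$ with $m\ge 2$, each $k_i\ge 2$, and $\sum k_i = n+1$, hence each $k_i\le n-1$; all such products therefore vanish on $K_n^{\varepsilon}$, which establishes the first equality $(\log P_0(K_n^{\varepsilon}))^{(n+1)} = P_0^{(n+1)}(K_n^{\varepsilon})$ of the statement.

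Second, it remains to show $P_0^{(n+1)}(K_n^{\varepsilon}) = (-1)^n 2^{n+1}(n+1)!\prod_{i=0}^{n+1}\varepsilon_i$. I would proceed by induction on $n\ge 1$. The base case $n=1$ is handled by a direct HOMFLYPT calculation on the (up to) eight explicit knots $K_1^{(\varepsilon_0,\varepsilon_1,\varepsilon_2)}$ read off from Figure \ref{fig:Kn}. For the inductive step, I apply the HOMFLYPT skein relation at an end crossing of the diagram corresponding to the sign $\varepsilon_{n+1}$, giving
\[ t^{-1}P(K_n^{\varepsilon};t,z) - t P(K_n^{\tilde\varepsilon};t,z) = z\,P(L_0;t,z), \]
where $\tilde\varepsilon$ differs from $\varepsilon$ only at the index $n+1$ and $L_0$ is the oriented smoothing. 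The planarity of the end leaf is essential: it ensures that $L_0$ is (isotopic to) a split union $K_{n-1}^{\varepsilon'}\sqcup U$ for a shorter sign sequence $\varepsilon'$ that can be read off from the figure. Extracting the $z^0$-coefficient of the skein identity and applying formula \eqref{eq:LM} with $Lk(L_0)=0$ and $P_0(U;t)=1$ yields
\[ t^{-1}P_0(K_n^{\varepsilon};t) - t P_0(K_n^{\tilde\varepsilon};t) = (t^{-1}-t)\,P_0(K_{n-1}^{\varepsilon'};t). \]
Differentiating this identity $n+1$ times at $t=1$, the Leibniz rule together with the vanishing of $P_0^{(k)}$ for $1\le k\le n$ (applied to $K_n^{\varepsilon}$, $K_n^{\tilde\varepsilon}$ and $K_{n-1}^{\varepsilon'}$) collapses both sides to finitely many terms, and the inductive hypothesis applied to $P_0^{(n)}(K_{n-1}^{\varepsilon'})$ delivers the announced closed form.

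The main obstacle I foresee is the geometric bookkeeping in the inductive step: identifying the smoothed link $L_0$ precisely and pinning down the sign dictionary between $\varepsilon'$ and $\varepsilon$, so that the resulting recursion matches the multiplicative sign $\prod_{i=0}^{n+1}\varepsilon_i$ (which may force treating each leaf as a clasp with two crossings and iterating the skein twice, rather than once). The planarity assumption is essential here: as the non-planar tree $N$ of Figure \ref{fig:planar} illustrates, a non-planar end leaf would smooth to a non-split linked pair and the clean recursion above would fail.
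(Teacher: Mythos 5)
Your reduction of $(\log P_0)^{(n+1)}$ to $P_0^{(n+1)}$ is correct and is exactly the paper's argument: $K^\varepsilon_n$ is $C_{n+1}$-equivalent to the unknot, so all $P_0^{(k)}$ with $k\le n$ vanish and the extra terms in the logarithmic expansion die. The gap is in your inductive step for $P_0^{(n+1)}(K^\varepsilon_n)$. Even granting your geometric identifications, the skein identity you write down involves \emph{two} unknowns at level $n$ (namely $K^\varepsilon_n$ and $K^{\tilde\varepsilon}_n$) and one knot at level $n-1$, so after differentiating it can only determine the difference $P_0^{(n+1)}(K^\varepsilon_n)-P_0^{(n+1)}(K^{\tilde\varepsilon}_n)$; without an additional anchor at each level the induction does not close. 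Worse, the identifications are in fact incompatible with the formula being proved: differentiating $t^{-1}P_0(K^\varepsilon_n;t)-tP_0(K^{\tilde\varepsilon}_n;t)=(t^{-1}-t)P_0(K^{\varepsilon'}_{n-1};t)$ $(n+1)$ times at $t=1$, and using the vanishing of the intermediate derivatives, gives $P_0^{(n+1)}(K^\varepsilon_n)-P_0^{(n+1)}(K^{\tilde\varepsilon}_n)=-2(n+1)P_0^{(n)}(K^{\varepsilon'}_{n-1})$; if the lemma held, the left side would be $\pm 2\cdot 2^{n+1}(n+1)!$ while the right side would be $\pm 2^{n+1}(n+1)!$, which is impossible. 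So a single crossing change at the $\varepsilon_{n+1}$ clasp cannot simultaneously flip that sign and smooth to a zero-linking union of $K^{\varepsilon'}_{n-1}$ with an unknot; the ``iterate the skein twice'' caveat you flag is precisely the unproved step, and as written the inductive step fails.

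For comparison, the paper's route avoids both problems. In Figure \ref{fig:Kn} changing one crossing of the end clasp unknots $K^+_n$ \emph{entirely}, so the skein relation (together with Equation (\ref{eq:LM}) applied to the smoothed two-component link, which has total linking number zero but need not be split) yields the one-unknown recursion $P_0(K^+_n;t)=t^2+(1-t^2)P_0(K^+_{n-1};t)$, whence $P^{(n+1)}_0(K^+_n)=-2(n+1)P^{(n)}_0(K^+_{n-1})$ and the all-positive case follows by induction from the trefoil. General signs are then not handled by skein theory at all: since $K^\varepsilon_n$ is obtained by surgery along the linear $C_{n+1}$-tree of Figure \ref{fig:knclasper}, Habiro's claim shows $K^\varepsilon_n\sharp K^+_n$ is $C_{n+2}$-equivalent to the unknot or to $K^+_n\sharp K^+_n$ according to $\prod_i\varepsilon_i$, and $C_{n+2}$-invariance plus additivity of $P_0^{(n+1)}$ gives $P^{(n+1)}_0(K^{\varepsilon}_n)=P^{(n+1)}_0(K^{+}_n)\prod_i\varepsilon_i$. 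If you want to keep a purely skein-theoretic induction over general $\varepsilon$, you must first establish the correct effect of the crossing change(s) at a clasp (two changes to flip a sign, one to unhook), and verify the resulting recursion is consistent; as it stands your proposal does not do this.
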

\noindent Notice that $K^{\varepsilon}_n$ is $C_{n+1}$-equivalent to the unknot, and that for all $k\le n$, 
we thus have 
$\big(\log P_0( K^{\varepsilon}_n )\big)^{(k)} = P_0^{(k)}( K^{\varepsilon}_n ) = 0$.  
\begin{proof}
Let us prove the second equality.
We first prove the formula for the knot $K^+_n:=K_n^{(1,...,1)}$, by induction on $n$.  
Since $K^{+}_{1}$ is the trefoil, we have $P^{(2)}_0( K^{+}_{1})=-8$. 
Suppose that $n>1$. 
Clearly, changing the crossing $c$ of $K^+_n$ yields the unknot 
(see Figure \ref{fig:Kn}).  
Hence
 $$  P_0( K^+_n;t ) = t^2 + tP_{-1}( L_n;t ), $$
where $L_n=K_1\cup K_2$ is the $2$-component link represented on the right-hand side of Figure \ref{fig:Kn}.  
 \begin{figure}[!h]
  \input{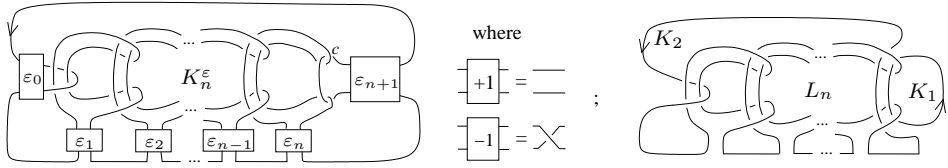}
  \caption{The knot $K^{\varepsilon}_n$ and the $2$-component link $L_n$. }\label{fig:Kn}
 \end{figure}
Notice that $K_1$ is an unknot, while $K_2$ is a copy of the knot $K^{+}_{n-1}$. Hence by Equation (\ref{eq:LM}) we have 
$$
 P_0( K^{+}_n;t ) = t^2 + (1-t^2) P_0( K^{+}_{n-1};t ). 
$$
By differentiating this equation $(n+1)$ times and evaluating at $t=1$, we obtain
\[P^{(n+1)}_0( K^{+}_n) = -2(n+1)P^{(n)}_0( K^{+}_{n-1}) - n(n+1)P^{(n-1)}_0( K^{+}_{n-1}). \]
Since $K^{+}_{n-1}$ is $C_n$-equivalent to the unknot, we see that $P_0^{(n-1)}(K^{+}_{n-1})=0$. 
Hence we have $P^{(n+1)}_0( K^{+}_n) = -2(n+1)P^{(n)}_0( K^{+}_{n-1})$. 
The induction hypothesis implies $P^{(n+1)}_0( K^{+}_n) = (-1)^n2^{n+1}(n+1)!$.

Now, notice that in general $K^{\varepsilon}_n$ is obtained from the unknot by surgery along the linear $C_{n+1}$-tree represented 
in Figure \ref{fig:knclasper}. 
 \begin{figure}[!h]
  \input{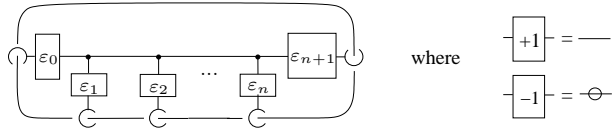}
  \caption{Here, a $\ominus$ on an edge represents a negative half-twist. }\label{fig:knclasper}
 \end{figure}
It follows from \cite[Claim in p-36]{H} that
$K^{\varepsilon}_n \# K^+_n$ is $C_{n+2}$-equivalent to the unknot (resp. to $K^+_n\# K^+_n$) 
if $\prod_{i=0}^{n+1}\varepsilon_i$ is equal to $-1$ (resp. to $=1$).
Since $P^{(n+1)}_0$ is an invariant of $C_{n+2}$-equivalence and 
the knot $K^{\varepsilon}_n$ is $C_{n+1}$-equivalent to the unknot for any $\varepsilon\in \{-1,1\}^{n+2}$, we have
$$(1+\prod_{i=0}^{n+1}\varepsilon_i)P^{(n+1)}_0(K^+_n)=P^{(n+1)}_0(K^{\varepsilon}_n\# K^+_n)=P^{(n+1)}_0(K^{\varepsilon}_n)+P^{(n+1)}_0(K^+_n)$$
Hence we have
$$ P^{(n+1)}_0(K^{\varepsilon}_n) =  P^{(n+1)}_0(K^{+}_n)\prod_{i=0}^{n+1}\varepsilon_i. $$  
The second equality follows.  

Recall that $(\log P_0)^{(n+1)}$ is given by the sum of $P_0^{(n+1)}$ 
and a combination of $P_0^{(k)}$'s with $k\le n$.   
Since the knot $K^{\varepsilon}_n$ is $C_{n+1}$-equivalent to the unknot, 
the first equality follows. 
\end{proof}

We note that, according to the above proof, Lemma \ref{lem:homfly_planar} gives the variation of 
$\big(\log P_0\big)^{(n+1)}$ and $P_0^{(n+1)}$
under surgery along a planar linear $C_{n+1}$-tree for the unknot.  
On the other hand, the HOMFLYPT polynomial does not change under surgery along a non-planar tree clasper, 
as follows from a formula of Kanenobu \cite{Kanenobu}.  
\begin{lemma}\label{lem:Pnonplanar}
Let $T$ be a non-planar linear tree clasper for a knot $K$.  Then 
$P_0(K_T;t) = P_0(K;t)$.
\end{lemma}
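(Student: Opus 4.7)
The plan is to deduce this statement from the formula of Kanenobu~\cite{Kanenobu} describing the change in the HOMFLYPT polynomial under surgery along a linear tree clasper for a knot, combined with a cancellation argument that exploits the non-planarity hypothesis. This is the same route suggested by the remark immediately preceding the lemma, but the argument requires unpacking the combinatorics of Kanenobu's formula in the non-planar case.

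First I would reinterpret surgery $K \mapsto K_T$ as a local tangle replacement: the $(k+1)$ leaves of $T$ meet $K$ in short arcs, and surgery along $T$ replaces a tangle containing these arcs by a twisted tangle encoding the linear $C_k$-move. Kanenobu's formula then expresses $P_0(K_T;t) - P_0(K;t)$ as a combinatorial sum indexed by ways of smoothing and/or switching at the leaves of $T$. Using formula~(\ref{eq:LM}) for the lowest-degree coefficient polynomial of a multi-component link, each term in this sum contributes a product of a power of $(t^{-1}-t)$, a sign determined by the $\varepsilon$-twists on the edges of $T$, and $P_0$'s of the knot components produced by the resolution. Crucially, both the power of $(t^{-1}-t)$ and the number of components arising depend only on how the leaves of $T$ are cyclically arranged along $K$, i.e.\ on the planarity data of $T$. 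In the planar case this recovers exactly the recursion underlying Lemma~\ref{lem:homfly_planar}.

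The main step, and the main obstacle, is the cancellation in the non-planar case. Since $T$ is non-planar there exist two leaves $\ell$ and $\ell'$ whose order along $K$ disagrees with their order along $T$. I would exhibit a fixed-point-free involution on the indexing set of the Kanenobu sum obtained by swapping the local smoothing/switching choices at $\ell$ and $\ell'$; this involution preserves the ancillary $P_0$-factors and the number of resulting components (hence the power of $t^{-1}-t$), while reversing the product of $\varepsilon$-signs. The terms therefore cancel in pairs, yielding $P_0(K_T;t)-P_0(K;t)=0$. The delicate part is the sign bookkeeping: one must check carefully, using the proof of Lemma~\ref{lem:parallel} and the $\varepsilon$-convention of Figure~\ref{fig:knclasper}, that swapping the choices at $\ell$ and $\ell'$ genuinely flips the sign of the contribution and leaves every other factor invariant.
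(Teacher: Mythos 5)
You are right to route the argument through Kanenobu's work \cite{Kanenobu}, but the mechanism you propose --- a sign-reversing involution pairing up terms of the skein expansion --- is not established and is not how the non-planarity hypothesis actually enters. The paper's proof uses a degree count, not a cancellation: writing the $C_n$-move as a sequence of crossing changes/smoothings as in Figure \ref{fig:Hnonplanar}, the difference $P(K_T)-P(K)$ is a sum of terms each carrying a factor $z^n$ times the HOMFLYPT polynomial of a resolved link $L[\varepsilon_2,\dots,\varepsilon_n]$. Such a term can contribute to the $z^0$ coefficient only if the resolved link has exactly $n+1$ components (its lowest $z$-degree is $1-r$, so one needs $r\ge n+1$); this is Kanenobu's criterion (3.9). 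Kanenobu further shows that $L[\varepsilon_2,\dots,\varepsilon_n]$ has $n+1$ components if and only if the chord diagram recording the smoothed crossings has no intersecting chords, and non-planarity of $T$ forces an intersection for \emph{every} choice of $\varepsilon$. Hence in the non-planar case there are simply no terms contributing to $P_0$ at all; nothing has to cancel.

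Your version has two concrete problems. First, the claim that the number of components of a resolved link ``depends only on how the leaves of $T$ are cyclically arranged along $K$'' is false: it depends on the choice of smoothing versus crossing change at each site (that is, on $\varepsilon$), which is exactly why Kanenobu's chord-diagram criterion is needed. Second, the proposed involution --- swapping the local choices at two leaves $\ell,\ell'$ whose orders along $K$ and along $T$ disagree --- is asserted to preserve the component count and the ancillary $P_0$-factors while flipping a sign, but changing a smoothing into a crossing change reconnects the strands and generally alters both the components and their knot types, and the ``signs'' in the skein recursion come weighted with powers of $t$ rather than clean $\pm1$ factors (the $\varepsilon$-twist convention of Figure \ref{fig:knclasper} applies to that specific family of claspers, not to an arbitrary non-planar $T$). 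Moreover, if such a term-by-term cancellation of full HOMFLYPT contributions existed it would show $P(K_T)=P(K)$, which is false in general --- only $P_0$ is unchanged. So the key missing idea is the component-count/$z$-degree argument together with Kanenobu's chord-diagram criterion; with those in hand the lemma follows without any involution.
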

\begin{proof}
We may assume that $K_T$ and $K$ are given by identical diagrams, except in a disk 
where they differ as illustrated in Figure \ref{fig:Hnonplanar}.
 \begin{figure}[!h]
  \input{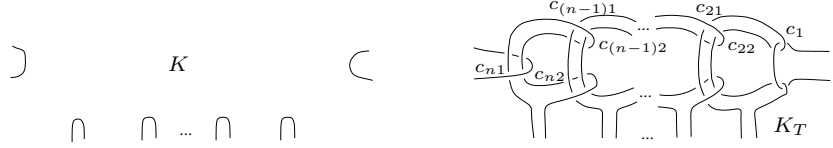}
  \caption{The knots $K$ and $K_T$. }\label{fig:Hnonplanar}
 \end{figure}
Let $L[\varepsilon_2,...,\varepsilon_n]~(\varepsilon_j\in \{-1,1\},~j=2,...,n)$ be the link
obtained from $K_T$ by smoothing the crossing $c_1$, and
\begin{itemize}
 \item[(i)] smoothing the crossing $c_{j1}$ if $\varepsilon_j=1$, or
 \item[(ii)] changing the crossing $c_{j1}$ and smoothing the crossing $c_{j2}$ if $\varepsilon_j=-1$. 
\end{itemize}
Kanenobu showed that if all links $L[\varepsilon_2,...,\varepsilon_n]$ have less than
$n+1$ components, then $P_0(K_T;t) = P_0(K;t)$, see \cite[(3.9)]{Kanenobu}. 
Moreover, Kanenobu showed how to estimate the number of components of $L[\varepsilon_2,...,\varepsilon_n]$ 
using a kind of a chord diagram which corresponds to the smoothed crossings.
More precisely, the chord diagram associated to $L[\varepsilon_2,...,\varepsilon_n]$ 
represents the $n$-singular knot obtained from $K_T$ by changing the crossing $c_1$ 
and each crossing $c_{j1}$ (resp. $c_{j2}$) such that $\varepsilon_j=1$ (resp. $\varepsilon_j=-1$) 
into double points.
Kanenobu showed that $L[\varepsilon_2,...,\varepsilon_n]$ has $n+1$ components if and only if the associated 
chord diagram contains no intersection among the chords,  
see the proof of \cite[Lem. (3.7)]{Kanenobu}.  
If $T$ is non-planar, then it is not hard to see that, for any $(\varepsilon_j\in \{-1,1\},~j=1,2,...,n)$, 
the corresponding chord diagram contains such an intersection. Thus we have the conclusion.  
\end{proof}
\begin{remark}
Lemmas \ref{lem:homfly_planar} and \ref{lem:Pnonplanar} are related to the main results of \cite{Kanenobu} and \cite{horiuchi}.  
\end{remark}
\section{Milnor invariants} \label{sec:milnor}
\subsection{A short definition} 
Given an $n$-component link $L$ in $S^3$, denote by $\pi$ the fundamental group of $S^3\setminus L$, and by $\pi_q$ 
the $q$th subgroup of the lower central series of $\pi$.  
We have a presentation  of $\pi/ \pi_q$ with $n$ generators, given by a
choice of meridian $m_i$ of the $i$th component of $L$, $i=1,...,n$.  
So the longitude $\la_j$ of the $j$th component of $L$ ($1\le j\le n$) 
is expressed modulo $\pi_q$ as a word  
in the $m_i$'s (abusing notations, we still denote this word by $\la_j$).  
The \emph{Magnus expansion} $E(\la_j)$ of $\la_j$ is the formal power series in 
non-commuting variables $X_1,...,X_n$ obtained by 
substituting $1+X_i$ for $m_i$ and $1-X_i+X_i^2-X_i^3+...$ for $m_i^{-1}$, $1\le i\le n$.   

Let $I=i_1 i_2 ...i_{k-1} j$ be a sequence of elements of $\n$. 
Denote by $\mu_L(I)$ the coefficient of $X_{i_1}...X_{i_{k-1}}$ in the Magnus expansion $E(\la_j)$.  
\emph{Milnor invariant} $\ov{\mu}_L(I)$ is the residue class of $\mu_L(I)$ modulo the greatest common divisor of 
all $\mu_L(J)$ such that $J$ is obtained from $I$ by removing at least one index and permuting 
the remaining indices cyclicly.  
The indeterminacy comes from the choice of the meridians $m_i$.  Equivalently, it comes from the indeterminacy of 
representing the link as the closure of a string link \cite{HL}.  
Let us recall below the definition of these objects. 
\subsection{String links} \label{sec:sl}
Let $n\ge 1$, and let $D^2\subset R^2$ be the two-dimensional disk equipped with $n$ marked points $x_1,...,x_n$ 
in its interior, lying on the diameter on the $x$-axis of $R^2$.
An \textit{$n$-string link}, or $n$-component string link, is the image of a proper embedding 
$\bigsqcup_{i=1}^n [0,1]_i \rightarrow D^2\times [0,1]$
of the disjoint union $\bigsqcup_{i=1}^{n} [0,1]_i$ of $n$ copies of $[0,1]$ in $D^2\times [0,1]$, 
such that for each $i$, the image of $[0,1]_i$ runs from $(x_i,0)$ to $(x_i,1)$.   
Each string of an $n$-string link is equipped with an (upward) orientation. 
The $n$-string link $\{x_1,...,x_n\}\times[0,1]$ in $D^2\times[0,1]$ is called 
the {\em trivial $n$-string link} and is denoted by $\1_n$.

For each marked point $x_i\in D^2$, there is a point $y_i$ on $\partial D^2$ in the upper half of $R^2$  
such that the segment $p_i=x_iy_i$ is vertical to the $x$-axis, as illustrated in Figure \ref{fig:disk}. 
Given an $n$-string link $L=\bigcup_{i=1}^n L_i$ in $D^2\times [0,1]\subset R^2\times [0,1]$, 
the \emph{closure} $\hat{L}$ of $L$ is the $n$-component link defined by 
$\hat{L}=\bigcup_{i=1}^n \hat{L_i}=L\cup(\bigcup_{i=1}^n(p_i\times\{0,1\})\cup(y_i\times I))$.   

The set of isotopy classes of $n$-string links fixing the endpoints has a monoid structure, 
with composition given by the \emph{stacking product} and with the trivial $n$-string link $\1_n$ as unit element. 
Given two $n$-string links $L$ and $L'$, we denote their product by $L\cdot L'$, which is obtained by stacking 
$L'$ \emph{above} $L$ and reparametrizing the ambient cylinder $D^2\times I$.

Habegger and Lin showed that Milnor invariants are actually well defined integer-valued invariants of string links \cite{HL}.  
(We refer the reader to \cite{HL} or \cite{yasuharaAGT} for a precise definition of Milnor invariants $\mu(I)$ of string links.)  
Furthermore, Milnor invariants of length $k$ are known to be finite-type invariants of degree $k-1$ for string links \cite{BN2,Lin}.  
As a consequence, Milnor invariants of length $k$ for string links are invariants of $C_{k}$-equivalence.  
Habiro showed that the same actually holds for Milnor invariants of links \cite{H}.  
\subsection{Some results}\label{sec:results_milnor}
It was shown by Habegger and Lin that Milnor invariants without repeated indices classify string links up to link-homotopy \cite{HL}.   
Here, the link-homotopy is the equivalence relation generated by self-crossing changes. 
In \cite{yasuhara}, the second author gave an explicit representative for the link-homotopy class of any $n$-string link in terms of linear tree claspers. We shall make use of this representative in this paper, and recall its definition below. 

Let $\mathcal{J}_k$ denote the set of all sequences $j_0 j_1...j_k$ of $k+1$ non-repeating integers from $\{1,...,n\}$ such that $j_0<j_m<j_k$ for all $m$. 
Let $i_0 i_1...i_k$ be a sequence of $k+1$ integers from $\{1,...,n\}$ such that $i_0<i_1<\cdots<i_{k-1}<i_k$, 
and let $a_J$ be a permutation of $\{i_1,...,i_{k-1}\}$.  
Then $J=i_0 a_J(i_1)...a_J(i_{k-1})i_k$ is in $\mathcal{J}_k$ 
(and all elements of $\mathcal{J}_k$ can be realized in this way). 
Let $T_J$ be the simple linear $C_k$-tree for $\1_n$ as illustrated in Figure \ref{fig:TJ}. 
Here, $a_J$ is the unique positive $k$-braid which defines the permutation $a_J$ and such that every pair of strings crosses at most once.  
In the figure, we also implicitely assume that all edges of $T_J$ \emph{overpass} all components of $\1_n$. 
(This assumption is crucial in the computation of Milnor invariants.)  
 \begin{figure}[!h]
  \includegraphics{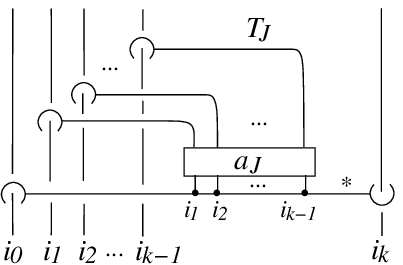}
  \caption{The $C_k$-trees $T_{J}$ and $\overline{T}_{J}$. }\label{fig:TJ}
 \end{figure}
Let $\overline{T}_J$ be the $C_k$-tree obtained from $T_J$ by inserting a positive half-twist in the $\ast$-marked edge, see Figure \ref{fig:TJ}. 
Denote respectively by $V_J$ and $V_J^{-1}$ the $n$-string links obtained from $\1_n$ by surgery along $T_{J}$ and $\overline{T}_J$.  
This notation is justified by the fact that, for any $J$ in $\mathcal{J}_k$, the string link  $V_J\cdot V_J^{-1}$
is $C_{k+1}$-equivalent to the trivial one \cite{H}.  
\begin{theorem}[\cite{yasuhara}] \label{thm:repr}
Any $n$-string link $L$ is link-homotopic to $l=l_1\cdots l_{n-1}$, where 
\[
  l_i= \prod_{J\in \mathcal{J}_i} V_J^{x_J}\textrm{ , where }\quad
x_{J}=\mu_{l_i}(J)=\left\{\begin{array}{ll}
\mu_L(J)&\textrm{if $i=1$},\\ 
& \\
\mu_L(J)-\mu_{l_1\cdots l_{i-1}}(J)& 
\textrm{if $i\geq 2$}.
\end{array}\right.
\]
\end{theorem}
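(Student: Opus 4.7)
The plan is to invoke the Habegger--Lin classification \cite{HL}: two $n$-string links are link-homotopic if and only if their Milnor invariants $\mu(J)$ coincide for every non-repeated sequence $J$. Since any such sequence has length at most $n$, it suffices to exhibit a product $l_1 \cdots l_{n-1}$ of the prescribed form whose non-repeated Milnor invariants agree with those of $L$. I would construct this product inductively, arranging at step $i$ that $\mu_{l_1 \cdots l_i}(J) = \mu_L(J)$ for every non-repeated sequence $J$ with $|J|\le i+1$.

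The first key step is to compute the Milnor invariants of the building blocks $V_J$. Since $T_J$ is a $C_i$-tree, the string link $V_J$ is $C_i$-equivalent to $\1_n$, so its non-repeated Milnor invariants of length $\le i$ vanish. A direct computation using Milnor's Magnus expansion, together with the fact that $T_J$ is the linear tree encoding the iterated Lie bracket $[[\cdots[[X_{j_0},X_{a_J(j_1)}],X_{a_J(j_2)}],\cdots],X_{j_i}]$, gives $\mu_{V_J}(J)=1$ and $\mu_{V_J}(J')=0$ for every $J'\in \mathcal{J}_i$ with $J'\neq J$. Moreover, $\mathcal{J}_i$ parametrises a basis for the length-$(i+1)$ non-repeated Milnor invariants of a $C_i$-trivial string link, modulo the shuffle and cyclic symmetries these invariants obey, reflecting a Hall-type basis of the degree-$(i+1)$ multilinear part of the free Lie algebra on $n$ generators where each generator appears at most once, with prescribed leftmost and rightmost positions.

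For the inductive step, assume $l_1\cdots l_{i-1}$ matches $L$ on all non-repeated Milnor invariants of length $\le i$. Define $x_J = \mu_L(J) - \mu_{l_1\cdots l_{i-1}}(J)$ for each $J\in \mathcal{J}_i$ and set $l_i = \prod_{J\in \mathcal{J}_i} V_J^{x_J}$. Since $l_i$ is $C_i$-equivalent to $\1_n$, and length-$k$ Milnor invariants are $C_k$-equivalence invariants, stacking $l_i$ onto $l_1\cdots l_{i-1}$ preserves Milnor invariants of length $\le i$. For $|J|=i+1$, the additivity formula $\mu_{l_1\cdots l_i}(J)= \mu_{l_1\cdots l_{i-1}}(J)+\mu_{l_i}(J)$ for stacking a $C_i$-trivial factor, combined with $\mu_{l_i}(J)=\sum_{J'\in \mathcal{J}_i} x_{J'}\mu_{V_{J'}}(J) = x_J$ from the preliminary computation, yields $\mu_{l_1\cdots l_i}(J)=\mu_L(J)$. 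After $n-1$ iterations, all non-repeated Milnor invariants coincide, and Habegger--Lin concludes.

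The main obstacle is the additivity formula for first non-vanishing Milnor invariants under stacking, coupled with the identification of $\mathcal{J}_i$ as a complete set of independent length-$(i+1)$ invariants for $C_i$-trivial string links. Both facts come from analysing the degree-$(i+1)$ term in the Magnus expansion of longitudes under the group product $\lambda_j^{A\cdot B}=\lambda_j^A\lambda_j^B$: the $C_i$-triviality hypothesis kills the lower-degree contributions whose products would otherwise obstruct linearity, and the resulting expansion coefficient becomes a genuinely additive function of the linear $C_i$-trees used to build each factor, with the prescribed $\mathcal{J}_i$-basis matching the rank of the relevant multilinear piece of the free Lie algebra.
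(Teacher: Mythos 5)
Your proposal is correct and follows essentially the same route as the proof the paper points to (\cite[Thm.~4.3]{yasuhara}, as explained in Remark~\ref{rem:VJ}): the key input is exactly the computation $\mu_{V_J}(J')=\delta_{JJ'}$ of Equation~(\ref{eq:muVJ}), combined with additivity of length-$(i+1)$ invariants when stacking $C_i$-trivial factors, and the Habegger--Lin classification of string links up to link-homotopy by nonrepeated Milnor invariants. The one step that genuinely needs care --- upgrading agreement on the sequences in $\mathcal{J}_i$ to agreement on \emph{all} nonrepeated sequences of length $i+1$, via cyclic symmetry and the first-letter basis of the multilinear part of the (reduced) free Lie algebra --- is the same fact the cited argument relies on, and you identify it correctly.
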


\begin{remark}\label{rem:VJ}
The above statement slightly differs from the one in \cite{yasuhara}. There, another family of $C_k$-trees is used in place of $T_J$ and 
$\overline{T}_J$.  However, the present statement is shown by the exact same arguments as for \cite[Thm. 4.3]{yasuhara}, since for any 
$J,J'\in \mathcal{J}_k$ ($k\ge 1$), we have 
\begin{equation}\label{eq:muVJ}
\mu_{V_{J}}(J')=
\left\{\begin{array}{ll}
1&\text{ if $J=J'$,}\\
0&\text{ otherwise, }
\end{array}
\right.
\end{equation}
\noindent (compare with \cite[Lem. 4.1]{yasuhara}). 
\end{remark}
For the representative $l=l_1\cdots l_{n-1}$ above, we have the following lemmas. 

\begin{lemma}\label{lem:1}
Let $I<12...n$ with $|I|=m\leq n$. 
Then, 
\begin{equation}\label{eq:muIln}
 \mu_{l}(I) \equiv x_I \textrm{ $($mod $\gcd \{ x_{J}~|~J\lneq I \})$}. 
\end{equation} 
Moreover, for all $k<m-1$, we have 
\begin{equation}\label{eq:muIln2}
 \mu_{l_1\cdot...\cdot l_{k}}(I)\equiv  0 \textrm{ $($mod $\gcd \{ x_{J}~|~J\lneq I,~|J|\leq k+1 \})$}. 
\end{equation}
\end{lemma}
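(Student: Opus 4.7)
The plan is to first deduce (\ref{eq:muIln}) from (\ref{eq:muIln2}), then to prove (\ref{eq:muIln2}) by analyzing how Milnor invariants propagate through the stacking product decomposition of~$l$.

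\emph{Reduction of (\ref{eq:muIln}) to (\ref{eq:muIln2}).} By Theorem~\ref{thm:repr}, for $m = |I| \geq 3$ we have $x_I = \mu_{l_{m-1}}(I) = \mu_L(I) - \mu_{L_{m-2}}(I)$, where I abbreviate $L_k := l_1\cdots l_k$. Since $l$ is link-homotopic to $L$ and $I$ has no repeated index, $\mu_l(I) = \mu_L(I)$, so that $\mu_l(I) - x_I = \mu_{L_{m-2}}(I)$. As the condition $J \lneq I$ automatically forces $|J| \leq m-1$, (\ref{eq:muIln}) follows from (\ref{eq:muIln2}) applied to $k = m-2$. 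The case $m = 2$ is immediate since then $\mu_l(I) = x_I$ by construction.

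\emph{Proof of (\ref{eq:muIln2}).} Fix $|I| = m$ and $k < m-1$. The argument rests on two ingredients. First, for any $V_J$ with $J \in \mathcal{J}_{|J|-1}$: the string link $V_J$ is $C_{|J|-1}$-equivalent to $\1_n$, so $\mu_{V_J}(K) = 0$ when $|K| < |J|$; Remark~\ref{rem:VJ} gives $\mu_{V_J}(K) = \delta_{J,K}$ for $J, K \in \mathcal{J}_{|J|-1}$; and for a non-repeating sequence $K$ with $|K| > |J|$, $\mu_{V_J}(K) = 0$ since the Magnus expansion of each longitude of $V_J$ only involves meridians indexed by $\{J\}$, so no non-repeating monomial of length $|K|-1 \geq |J|$ with all letters in $\{J\}$ can exist. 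Second, under the stacking product one has a formula of the shape
\[
\mu_{L \cdot L'}(I) = \mu_L(I) + \mu_{L'}(I) + \sum \mu_L(K_1)\,\mu_{L'}(K_2),
\]
obtained from the multiplicativity of the Magnus expansion of the longitudes (up to conjugation by meridians), where each $K_1, K_2$ arises from cutting the prefix $i_1\cdots i_{m-1}$ of $I$ at some position and appending $i_m$ to each piece. Iterating this formula over the decomposition $L_k = \prod_s V_{J_s}^{x_{J_s}}$ with $|J_s| \leq k+1 < m$, every linear term vanishes by the first ingredient, and $\mu_{L_k}(I)$ reduces to a sum of monomials $\prod_t x_{K_t}$ where each sub-invariant appearing as a factor is forced by the $\delta$-condition to come from a sub-sequence $K_t$ equal to the associated $J_t$. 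Each such $K_t$ is an increasing sub-sequence of $I$ of length $\leq k+1$, so $K_t \lneq I$ and $x_{K_t}$ lies in $\gcd\{x_J : J \lneq I,\ |J| \leq k+1\}$; the claim follows.

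\textbf{Main obstacle.} The principal technical difficulty is establishing the stacking-product formula above rigorously. The longitude of $L \cdot L'$ is not simply the product of the longitudes of $L$ and $L'$, but involves conjugation by meridian-valued elements arising from the way the two string links are composed; these conjugations contribute additional Magnus-expansion terms which must be verified to produce only further products of Milnor invariants of proper sub-sequences of $I$, so that the combinatorial argument above still goes through. An alternative strategy is to work entirely within clasper calculus (Section~\ref{sec:claspers}), using Lemma~\ref{lem:parallel} to manipulate the claspers $T_{J_s}$ and $\overline{T}_{J_s}$ representing $L_k$ into a normal form from which $\mu_{L_k}(I)$ can be extracted directly, at the cost of keeping careful track of the indices produced by the leaf-slide and edge-crossing operations.
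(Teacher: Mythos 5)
Your reduction of \eqref{eq:muIln} to \eqref{eq:muIln2} is correct (the paper obtains the same identity $\mu_l(I)=\mu_{l_1\cdots l_{m-2}}(I)+x_I$ via an additivity lemma rather than via link-homotopy invariance), and your general strategy --- expand the Magnus series of the longitudes of the blocks $V_J^{\pm1}$ and track how they combine under stacking --- is the same as the paper's. The gap is in the main step, and it sits exactly where you flag your ``main obstacle''. First, your auxiliary claim that $\mu_{V_J}(K)=0$ for every non-repeating $K$ with $|K|>|J|$ is false: your counting argument overlooks the case $|K|-1=|J|$, where a non-repeating monomial of length $|J|$ in the variables $X_i$, $i\in\{J\}$, certainly exists. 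Concretely, for $J=13\in\mathcal{J}_1$ the clasp created by $T_{13}$ passes over strand $2$, the second longitude of $V_{13}$ is a commutator of conjugates of $m_1$ and $m_3$, and $\mu_{V_{13}}(132)=\pm1$. This is precisely case (iii) of the paper's analysis (the $M(\{J\})$ terms for a component lying between the feet of the clasper), and such coefficients are neither zero nor equal to any $x_J$; equation \eqref{eq:muVJ} only controls sequences in $\mathcal{J}_k$, i.e.\ those starting at the minimum and ending at the maximum of $\{J\}$.

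This matters because these are exactly the quantities that enter the conjugation corrections in the stacking formula, so your assertion that after iterating the product formula ``each sub-invariant appearing as a factor is forced by the $\delta$-condition'' to be some $x_{K_t}$ is unjustified, and in fact not true term by term. The paper does not prove divisibility by identifying each factor with an $x_J$; instead it (i) first reduces to $I=12\ldots m$ and an $m$-string link (so only blocks with $\{J\}\subset\{I\}$ survive --- a reduction you never make), (ii) shows that the $m$th longitude of each block is $1$, or $1+x_JX_{j_0}\cdots X_{j_{k-1}}+O(I)$, so its only nonconstant term outside $O(I)$ already carries a factor $x_J$ with $J\lneq I$, $|J|\le k+1$, and (iii) observes that the stacking substitutions replace each $X_i$ by $X_i$ plus higher monomials containing $X_i$, an operation which preserves both divisibility by $x_J$ and membership in $O(I)$ (monomials not occurring as subsequences of $12\ldots(m-1)$), so that every nonconstant term of the final expansion is either divisible by a relevant $x_J$ or cannot contribute to the coefficient of $X_1\cdots X_{m-1}$. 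Without an argument of this type (or the clasper-calculus alternative you mention but do not carry out), the correction terms are uncontrolled and \eqref{eq:muIln2} is not established.
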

\begin{proof}
By \cite[Lem. 3.3]{MY1} and Theorem~\ref{thm:repr}, 
we have $\mu_{l}(I)=\mu_{l_1\cdot...\cdot l_{m-2}}(I) + \mu_{l_{m-1}}(I)=\mu_{l_1\cdot...\cdot l_{m-2}}(I) + x_I$.  
Hence Equation (\ref{eq:muIln2}) implies  Equation (\ref{eq:muIln}), and it suffices to prove 
Equation (\ref{eq:muIln2}).
 
Note that, for an $n$-string link $L=\bigcup_{i=1}^n L_i$, we have $\mu_I(L)=\mu_I(\bigcup_{i\in\{I\}}L_i)$. 
Hence we may assume that $I=12...m$ and that $l$ is an $m$-string link. 
The result is shown by an analisis of the the Magnus expansion of a longitude of each ``building block'' $V_J^{x_J}$, 
for all $k< m-1$ and all $J\in \mathcal{J}_k$.  
Since we are aiming at computing Milnor invariant $\mu(I)$, we compute up to terms $O(I)$ involving monomials $X_{i_1}X_{i_2}...X_{i_p}$ 
such that $i_1 i_2... i_p$ is not a subsequence of $12...(m-1)$. 
(Note that $O(I)$ includes any monomial where some variable appears at least twice, as well as any monomial involving $X_m$.)
For a subset $\{K\}$ of $\{I\}$, we will also use the notation $M(\{K\})$ for a sum of terms involving monomials such that all $X_j~(j\in\{K\})$ 
appear exactly once in each monomial. 

Let $J=j_0j_1...j_k\in \mathcal{J}_k$, for some $k< m-1$. 
Let $j$ be an index in ${I}$, and denote by $\la_j$ the $j$th longitude of $V_J^{\pm 1}$. 
Notice that all monomials appearing in $E(\la_{j})$ are in the variables $X_i$ such that $i\in \{J\}$, since 
all edges of both $T_J$ and $\overline{T}_J$ overpass all components of $\1_n$. 
By our assumption, $\{J\}$ is a subset of $\{I\}$, and in particular $j_k\leq m$.
Then there are three cases : 
\begin{enumerate}
\item[(i)] If $j<j_0$ or $j_k< j$, then clearly we have $E(\la_j)=1$.  
\item[(ii)] If $j\in \{J\}$, since all Milnor invariants of $V_J^{\pm 1}$ with length at most $k$ vanish,  
 $$ E(\la_{j}) = 1 +M(\{J\}\setminus \{j\}) + M(\{J\})+O(I). $$
\item[(iii)] If $j\notin \{J\}$ and $j_0< j <j_{k}$, then 
since $V_J^{\pm 1}\setminus (i\text{th component})$ is trivial for any $i\in\{J\}$, 
all (nontrivial) monomials appearing in $E(\la_{j})$ contain all variables $X_i~(i\in \{J\})$.  
Hence we have a Magnus expansion of the form
$$
 E(\la_{j}) = 1 + M(\{J\}) + O(I).
$$
\end{enumerate}
Summarizing all three cases, we have  
\begin{equation}\label{eq:Mj}
 E(\la_{j}) =
 \left\{\begin{array}{ll}
 1 & \text{if $i<j_0$ or $j_k< j$,}\\
 1 + M(\{J\}\setminus\{j\}) + M(\{J\})+O(I)  & \text{if $j\in \{J\}$, }\\
 1 + M(\{J\}) + O(I)  & \text{if $j\notin \{J\}$ and $j_0< j <j_{k}$.}
\end{array}
\right.
\end{equation} 
\noindent 
In particular, for $j=m$ we are either in case (i) or (ii), depending on whether $m$ is in $\{J\}$ or not.  
Note that case (ii) occurs only when $m=j_k$. 
Since $j_0$ is the smallest integer in $\{J\}$, any monomial in $M(\{J\}\setminus\{m\})$ 
whose leftmost variable is not $X_{j_0}$ is in $O(I)$. 
So it follows from (\ref{eq:muVJ}) that if $m$ is in $\{J\}$ we have 
 $$ E(\la_{m}) = 1 \pm X_{j_{0}}...X_{j_{k-1}}+ O(I).$$
\noindent (Note that $M(\{J\})$ is contained in $O(I)$ if $m\in \{J\}$.) 
 
Now let us consider the stacking product $V_J^{\pm 1}\cdot V_J^{\pm 1}$. 
The Magnus expansion of the $j$th longitude of $V_J^{\pm 1}\cdot V_J^{\pm 1}$ is given by 
$E(\la_j)E(\tilde{\la_j})$, where $\la_j$ is the $j$th longitude of $V_J^{\pm 1}$ and 
$E(\tilde{\la_j})$ is obtained from $E(\la_j)$ by replacing $X_i$ with 
$E(\la_i)^{-1}X_iE(\la_i)$ for each $i\in\{I\}$. 
By (\ref{eq:Mj}), we have
$$E(\la_i)^{-1}X_iE(\la_i)=
 \left\{\begin{array}{ll}
 X_i & \text{if $i<j_0$ or $j_k< i$,}\\
 X_i+ M(\{J\})+O(I)  & \text{if $i\in \{J\}$, }\\
 X_i+M(\{J\}\cup\{i\})+ O(I)  & \text{if $i\notin \{J\}$ and $j_0< i <j_{k}$.}
\end{array}
\right.
$$
This implies that
$$ E(\la_j)E(\tilde{\la_j})=
 \left\{\begin{array}{ll}
1 & \text{if $j<j_0$ or $j_k<j$,}\\
E(\la_j)E(\la_j)+ M(\{J\})+O(I)&\text{if $j\in \{J\}$}.\\
1+M(\{J\})+O(I)& \text{if $j\notin \{J\}$ and $j_0< j <j_{k}$}.
\end{array}
\right.
$$
It follows that the Magnus expansion of the $j$th longitude $\la_{J,j}$ in $V_J^{x_J}$ is given by 
$$ E(\la_{J,j})=
 \left\{\begin{array}{ll}
1 & \text{if $j<j_0$ or $j_k<j$,}\\
E(\la_j)^{|x_J|}+ M(\{J\})+O(I)&\text{if $j\in \{J\}$}.\\
1+M(\{J\})+O(I)& \text{if $i\notin \{J\}$ and $j_0< i <j_{k}$}.
\end{array}
\right.
$$

In particular, the Magnus expansion of the $m$th longitude $\la_{J,m}$ in $V_J^{x_J}$ is either $1$ 
if $m\notin\{J\}$, or 
$1 +x_JX_{j_{0}}...X_{j_{k-1}}+ O(I)$ otherwise (in which case $j_k=m$). 
(Recall that if $m\in \{J \}$, each term in $M(\{J\})$ involves the variable $X_m$, and hence is in $O(I)$.)  
It follows that we have 
$$
E(\la_{J,m})=
\left\{\begin{array}{ll}
1 & \text{if $m\notin\{J\}$}\\
1 +O(I)~(\text{mod}~x_J) & \text{if $m\in \{J\}$ and $J<I$,}\\
1 +O(I) & \text{otherwise}.
\end{array}
\right.
$$
We can now focus on the computation of $\mu_{l_1\cdot...\cdot l_k}(I)$.  
Recall that the Magnus expansion of the $m$th longitude of $l_1\cdot...\cdot l_{k}$ is obtained from 
a product of $E(\la_{J,m})$'s ($\{J\}\subset \{I\},~J\in\bigcup_{s=1}^k\mathcal{J}_s$) 
by replacing each variable $X_i$ with $X_i+(\text{monomials involving }X_i)$.
It follows that the Magnus expansion of the $m$th longitude of $l_1\cdot...\cdot l_{k}$ 
is of the form 
\[1+O(I) ~~~~~~\text{ $($mod $\gcd \{ x_{J}~|~J\lneq I,~|J|\leq k+1 \})$.}\] 
This implies Equation (\ref{eq:muIln2}). 
\end{proof}

\begin{lemma}\label{lem:2}
Let $I<12...n$ with $|I|=m\leq n$. Then 
 $$ \Delta_{l}(I) = \gcd \{ x_J~|~J\lneq I\}. $$
\end{lemma}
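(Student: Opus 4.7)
The plan is to argue by induction on $m = |I|$; the cases $m \le 2$ are immediate since both sides vanish. For the inductive step it suffices to check that the $\mathbb{Z}$-ideal generated by the $\mu_l(K)$'s appearing in the definition of $\Delta_l(I)$ coincides with the ideal $(x_J : J \lneq I)$, where $K$ ranges over sequences obtained from $I$ by removing at least one index and cyclically permuting the remaining indices.

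For the inclusion of $(x_J : J \lneq I)$ into the $\mu_l(K)$-ideal, fix any $J \lneq I$. Lemma~\ref{lem:1} gives $\mu_l(J) \equiv x_J \pmod{\gcd\{x_{J'} : J' \lneq J\}}$. Since $J$ itself is a valid choice of $K$ (a sorted proper subsequence is obtained via the trivial cyclic permutation), $\mu_l(J)$ lies in the target ideal. The inductive hypothesis applied to $J$ identifies $\gcd\{x_{J'} : J' \lneq J\}$ with $\Delta_l(J)$, and each of the $\mu_l(K')$'s generating $\Delta_l(J)$ is automatically also among the $\mu_l(K)$'s generating $\Delta_l(I)$ (by composing the two removal/cyclic-rotation steps), so the modulus lies in the target ideal as well; hence so does $x_J$.

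For the reverse inclusion, let $K$ be obtained from $I$ by removing at least one index and cyclically permuting, and let $K_0$ denote the rearrangement of the same indices in increasing order, so that $K_0 \lneq I$ and $K$ is a cyclic rotation of $K_0$. Milnor's cyclic invariance of $\overline{\mu}$-invariants applied to the closure $\hat l$ (together with the standard identification $\overline{\mu}_{\hat{l}}(K) \equiv \mu_l(K) \pmod{\Delta_l(K)}$ for a string link and its closure, and the observation that $\Delta_l$ depends only on the cyclic equivalence class of its argument) yields $\mu_l(K) \equiv \mu_l(K_0) \pmod{\Delta_l(K_0)}$. By the inductive hypothesis, $\Delta_l(K_0) = \gcd\{x_{J'} : J' \lneq K_0\}$, which lies in $(x_J : J \lneq I)$ since $J' \lneq K_0 \lneq I$; and Lemma~\ref{lem:1} applied to the increasing sequence $K_0$ places $\mu_l(K_0)$ in the same ideal. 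Combining, $\mu_l(K) \in (x_J : J \lneq I)$. The only non-routine step in this plan is the cyclic-invariance reduction for $\mu_l(K)$ versus $\mu_l(K_0)$; once that is granted, the argument is straightforward bookkeeping on subsequences combined with Lemma~\ref{lem:1} and the inductive hypothesis.
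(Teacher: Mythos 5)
Your proof is correct and follows essentially the same route as the paper's: induction on $|I|$, using Lemma~\ref{lem:1} together with the inductive hypothesis, and the cyclic invariance of $\mu_l$ modulo the indeterminacy (which the paper also invokes, only more tersely). Packaging the argument as two ideal inclusions rather than the paper's direct chain of gcd identities is merely a cosmetic difference.
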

\begin{proof}
The proof is by induction on $m$.  
For $m=3$, the result is clear since $x_{ij}=\mu_{l_1}(ij)=\mu_l(ij)$ for any $i,j$.    
Now, let $m\ge 4$.  
It will be convenient to use the notation 
$\delta_{k}(I)$ for the set of all sequences of length $(m-k)$ obtained from $I$ by removing $k$ indices and permuting cyclicly. 
By definition, 
$$ \Delta_{l}(I) = \gcd ( \{ \mu_l(J)~|~J\in \delta_k(I),~k>1 \} \cup \{ \mu_l(J)~|~J\in \delta_{1}(I) \} ). $$
By the induction hypothesis, we have that 
\beQ
\gcd \{ \mu_l(J)~|~J\in \delta_k(I),~k>1 \} & = & 
       \gcd \{ \Delta_l(J)~|~J\in \delta_{1}(I) \}    \\
 & = & \gcd \{ \Delta_l(J)~|~J\in \delta_{1}(I),~J<I \} \\
 & = & \gcd \{ x_{J'}~|~J'\lneq J,~J\in \delta_{1}(I),~J<I \} \\
 & = & \gcd \{ x_{J'}~|~J'<I,~|J'|<m-1 \}.  
\eeQ
On the other hand, by Lemma \ref{lem:1}, for all $J\in \delta_{1}(I)~(J<I)$ and for any sequence $\tau(J)$ 
obtained from $J$ by permuting cyclicly, we have 
\[ \mu_{l}(\tau(J)) \equiv \mu_{l}(J) \equiv x_J \textrm{ (mod  $\gcd \{ x_{J'}~|~ J'\lneq J \})(=\Delta_l(J))$}. \]
It follows that $\Delta_{l}(I) = \gcd \{ x_J ~|~J<I,~|J|\le m-1 \}$,
as desired.  
\end{proof}
\section{Proof of Theorem \ref{thm:main}}\label{sec:proof}
Let $L=\bigcup_{i=1}^n L_i$ be an  $n$-component link in $S^3$, and let 
$I$ be a sequence of $(m+1)$ distinct elements of $\{1,...,n\}$. 
It is sufficient to consider here the case $m+1=n$, since, if $m+1<n$, we have that $\ov{\mu}_L(I)=\ov{\mu}_{\bigcup_{i\in\{I\}}L_i}(I)$. 
We may further assume that $I=12...n$ without loss of generality. 
Indeed, for any permutation $I'$ of $12...n$, we have that $\ov{\mu}_L(I')=\ov{\mu}_{L'}(12...n)$, 
where $L'$ is obtained from $L$ by reordering the components appropriately.  

We first show how to reformulate the problem in terms of string links.  

\subsection{Closing string links into knots}\label{sec:closesl}
Let $B_I$ be an $I$-fusion disk for $L$, as defined in the introduction.  
Up to isotopy, we may assume that the $2n$-gon $B_I$ lies in the unit disk $D^2$ as shown in Figure \ref{fig:disk}, 
where the edges $p_j$ are defined by $p_j=x_jy_j$, $1\le j\le n$.  
We may furthermore assume that $L\cup B_I$ lies in the cylinder $D^2\times [0,1]$,   
such that $B_I\subset (D^2\times \{ 0 \})$, and such that
$$L\cap \partial (D^2\times [0,1]) 
= \bigcup_{j=1}^n \left ( (p_j\times \{0\})\cup (\{ y_j \}\times [0,1])\cup (p_j\times \{1\}) \right). $$ 
 \begin{figure}[!h]
  \input{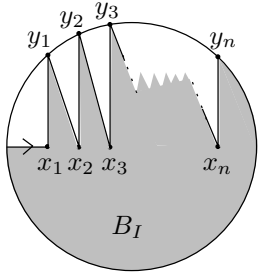}
  \caption{The $2n$-gon $B_I$ lying in the unit disk $D^2$. }\label{fig:disk}
 \end{figure}

\noindent 
In this way, we obtain an $n$-string link $\si$ whose closure $\hat{\si}$ is the link $L$, by setting 
\begin{equation}\label{eq:defsigma}
 \si := \overline{L\setminus (L\cap \partial (D^2\times [0,1]))}. 
\end{equation}

Given an $n$-string link $K=\bigcup_{i=1}^n K_i$ and any subsequence $J$ of $I=12...n$, 
we will denote by $K(J)$ the knot 
$$ K(J):=\overline{( (\bigcup_{j\in \{J\}} \hat{K}_j)\cup \partial B_I )\setminus ( (\bigcup_{j\in \{J\}} \hat{K}_j)\cap B_I )}. $$ 
Note that $K(J)$ coincides with the knot $(\hat{K})_J$ defined in the introduction for the choice of $I$-fusion disk $B_I$ specified above.  

Recall from Section \ref{sec:results_milnor} that, for any $k$ and any $J\in \mathcal{J}_k$, 
$V_J$ (resp. $V_J^{-1}$) denotes the $n$-string link obtained from $\1_n$ by surgery along the $C_k$-tree $T_J$ 
(resp. $\ov{T}_J$), see Figure \ref{fig:TJ}.  
Denote by $t_J$ (resp. $\ov{t}_J$) the image of the $C_k$-tree $T_J$ (resp. $\ov{T}_J$) for $\1_n$ under taking 
the closure $V_J^{\pm 1}(I)$.  
We observe that $t_J$ (resp. $\ov{t}_J$) is a planar tree clasper for the unknot if and only if $J<12...n$.  
In this case, note that $V_J(I)$ (resp. $V_J^{- 1}(I)$) is the knot $K^\varepsilon_{k-1}$ of Lemma \ref{fig:Kn} 
with $\varepsilon=(-,+,...,+)$ (resp. for $\varepsilon=(+,+,...,+)$). 
In particular, observe that $V_I^{x_I}(J)$ is the unknot for all $J\lneq I$ 
and that, by Lemma \ref{lem:parallel} (for $m=1$), the knot 
$V_I^{x_I}(I)$ is $C_{n}$-equivalent to the connected sum of 
$|x_I|$ copies of $V_I^{\ve_I}(I)$, where $\ve_I$ denotes the sign of $x_I$. 
By Lemma \ref{lem:homfly_planar} we thus have, for all $I,J\in\mathcal{J}_{n-1}$, 
\begin{equation}\label{eq:pVI}
(\log P_0(V^{x_J}_J(I)))^{(n-1)}=  P_0^{(n-1)}(V^{x_J}_J(I)) = \left\{\begin{array}{ll}
   (-1)^{n-1}x_I(n-1)!2^{n-1} & \textrm{if $J=I$},\\ 
   0 & \textrm{otherwise.}
\end{array}\right.
\end{equation}

\subsection{Proof of Theorem \ref{thm:main}}
Let $\si$ be the $n$-string link with closure $L$ defined in Section \ref{sec:closesl}.  
By Theorem~\ref{thm:repr}, 
$\si$ is link-homotopic to $l_k\cdot ...\cdot l_{n-1}$, 
where $l_i=\prod_{J\in \mathcal{J}_i} V_J^{x_J}$ is defined in Section \ref{sec:results_milnor}, 
and with $n\leq 2k+1$ (by our vanishing assumption on Milnor invariants).
Hence $\si$ is obtained from $l_k\cdot ...\cdot l_{n-1}$ by surgery along a disjoint union $R_1$ 
of simple $C_1$-trees whose leaves intersect a single component of $l_k\cdot ...\cdot l_{n-1}$.  

By Lemma \ref{lem:parallel}, for all $J<I$, we have that 
 $$ \si(J)\stackrel{C_{n}}{\sim} l_{n-1}(J)\sharp (l_k\cdot ...\cdot l_{n-2})_{R_1}(J). $$
Since $(\log P_0)^{(n-1)}$ is an invariant of $C_{n}$-equivalence for all $n$, 
it follows from the additivity property of $(\log P_0)$ that 
 $$ \big(\log P_0(\si(J))\big)^{(n-1)} 
  = \big(\log P_0(l_{n-1}(J))\big)^{(n-1)} + \big(\log P_0\left( (l_k\cdot ...\cdot l_{n-2})_{R_1}(J)\right)\big)^{(n-1)}.   
 $$
The proof of the next lemma is postponed to Section \ref{sec:proofclaim}. 

\begin{lemma}\label{lem:claim}
 \[ \frac{-1}{(n-1)!2^{n-1}}\sum_{J<I} 
(-1)^{|J|}
  \big(\log P_0\left( (l_k\cdot ...\cdot l_{n-2})_{R_1}(J)\right)\big)^{(n-1)} 
 \equiv 0  \textrm{ (mod $\Delta_{L}(I)$)}. \]
\end{lemma}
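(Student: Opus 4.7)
The plan is to decompose $(l_k\cdot\ldots\cdot l_{n-2})_{R_1}(J)$, modulo $C_n$-equivalence and modulo $\Delta_L(I)$, into a connected sum of knots each of which depends on $J$ only through whether some fixed index set $\{T\}\subsetneq\{I\}$ is contained in $\{J\}$, and then to finish with a straightforward inclusion-exclusion. Since $(\log P_0)^{(n-1)}$ is an invariant of $C_n$-equivalence, this reduction is legitimate.

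First, I would separate the constituent tree claspers. The string link $l_k\cdot\ldots\cdot l_{n-2}$ is obtained from $\1_n$ by surgery along the tree claspers $T_{J'}$ (or their half-twisted variants) for $J'\in\bigcup_{i=k}^{n-2}\mathcal{J}_i$, each appearing with $|x_{J'}|$ parallel copies, together with the self-$C_1$-trees in $R_1$. Applying Lemma~\ref{lem:parallel} iteratively, I would push all these claspers into pairwise disjoint balls. The correction claspers thereby produced contribute to $(\log P_0)^{(n-1)}$ of the closure either zero (by $C_n$-equivalence invariance) or a product of at least two of the coefficients $x_{J'}$. Since Lemma~\ref{lem:2} gives $\Delta_L(I)=\gcd\{x_{J'}:J'\lneq I\}$, which divides every such $x_{J'}$, any such product is $\equiv 0\pmod{\Delta_L(I)}$. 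The hypothesis $n\leq 2k+1$ is used precisely to ensure that no "fresh" single-clasper correction of degree below $n$ escapes this analysis.

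After the separation, the closure of the resulting disjoint configuration is a connected sum whose factors are the knots $V_{J'}^{x_{J'}}(J)$, one for each $J'\in\bigcup_{i=k}^{n-2}\mathcal{J}_i$, together with the knots arising from the individual $R_1$-trees. For each such $J'$, I claim:
\begin{itemize}
\item if $\{J'\}\not\subseteq\{J\}$, then $V_{J'}^{x_{J'}}(J)$ is the unknot, because the tree surgery along $T_{J'}$ trivializes once one of its component indices is excluded from the closure;
\item if $\{J'\}\subseteq\{J\}$, then $V_{J'}^{x_{J'}}(J)$ is isotopic to $V_{J'}^{x_{J'}}(\{J'\})$, because each component indexed by $\{J\}\setminus\{J'\}$ is a trivial strand in $V_{J'}^{x_{J'}}$ whose closure loop is an unknotted arc disjoint from the rest of the knot and can be isotoped onto the edge $p_j$ of $\partial B_I$.
\end{itemize}
Hence there is a constant $c_{J'}$, independent of $J$, with $(\log P_0(V_{J'}^{x_{J'}}(J)))^{(n-1)}=c_{J'}\,\mathbf{1}[\{J'\}\subseteq\{J\}]$. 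The same holds for each individual $R_1$-tree, whose index is a singleton $\{i\}\subsetneq\{I\}$.

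By additivity of $(\log P_0)^{(n-1)}$ under connected sum, the alternating sum in Lemma~\ref{lem:claim} reduces modulo $\Delta_L(I)$ to
\[ \sum_T c_T\sum_{J<I,\,\{T\}\subseteq\{J\}}(-1)^{|J|}, \]
where $T$ ranges over the separated claspers. Writing $\{J\}=\{T\}\sqcup A$ with $A\subseteq\{I\}\setminus\{T\}$, the inner sum equals $(-1)^{|T|}\sum_{A\subseteq\{I\}\setminus\{T\}}(-1)^{|A|}=0$, since $\{T\}\subsetneq\{I\}$ makes $\{I\}\setminus\{T\}$ nonempty. The main obstacle is the separation step: one has to track, for each application of Lemma~\ref{lem:parallel}, both the degree of the resulting correction claspers and the number of independent $x_{J'}$-factors each of them carries, using the bound $n\leq 2k+1$ to ensure a clean cancellation modulo $\Delta_L(I)$.
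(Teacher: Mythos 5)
Your overall strategy (localize the claspers via Lemma~\ref{lem:parallel}, use additivity of $(\log P_0)^{(n-1)}$ under connected sum, and kill each factor by the alternating-sum identity $\sum_{J<I,\ \{T\}\subseteq\{J\}}(-1)^{|J|}=0$) is the same as the paper's, but there is a genuine gap in the final step: you assert that every separated factor $T$ has index $\{T\}\subsetneq\{I\}$, and your inclusion--exclusion relies on $\{I\}\setminus\{T\}$ being nonempty. This fails for the correction claspers. When $n=2k+1$, a leaf slide between two degree-$k$ trees $T_{J'}$, $T_{J''}$ produces a surviving correction tree of degree $k+k'=n-1$ whose index is $\{J'\}\cup\{J''\}$, and two $(k+1)$-element subsets of $\{1,\dots,2k+1\}$ can very well cover all of $\{I\}$ (similarly for interactions with the $R_1$-trees when $\deg T_{J'}=n-2$). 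For such a full-weight factor $g$ the inner alternating sum equals $(-1)^n\neq 0$, so your cancellation does not apply. These terms are precisely the reason the paper proves Claim~\ref{claim:R}(2): it shows via AS/IHX and Lemmas~\ref{lem:homfly_planar} and \ref{lem:Pnonplanar} that $(\log P_0(U_g))^{(n-1)}\equiv 0 \pmod{(n-1)!2^{n-1}}$ for any such degree-$(n-1)$ tree, and it arranges (by first splitting each $T^{x_{J'}}_{J'}$ into parallel families of multiplicity $\Delta_L(I)$, using Lemma~\ref{lem:2}) that these factors occur with multiplicity $\Delta_L(I)$, so that their total contribution is divisible by $(n-1)!2^{n-1}\Delta_L(I)$.

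This points to a second, related weakness in your separation step: you dispose of the corrections by saying they contribute ``a product of at least two $x_{J'}$'s, hence $\equiv 0 \pmod{\Delta_L(I)}$.'' But the statement to be proved requires the \emph{alternating sum itself} to be divisible by $(n-1)!2^{n-1}\Delta_L(I)$, because of the prefactor $\frac{-1}{(n-1)!2^{n-1}}$; divisibility by $\Delta_L(I)$ (or even $\Delta_L(I)^2$) is not enough unless you also extract the factor $(n-1)!2^{n-1}$ from the value of $(\log P_0)^{(n-1)}$ on a single localized $C_{n-1}$-tree surgery --- which is exactly the content of Lemma~\ref{lem:homfly_planar} (planar case) and Lemma~\ref{lem:Pnonplanar} (non-planar case) that your argument never invokes. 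So the skeleton is right, but the full-index degree-$(n-1)$ correction trees must be treated separately by a divisibility argument rather than by cancellation.
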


\medskip
\noindent 
It follows that 
\beQ
&&
\frac{-1}{(n-1)!2^{n-1}}\sum_{J<I} (-1)^{|J|}
 \big(\log P_0(\si(J))\big)^{(n-1)} \\
 & \equiv & 
\frac{-1}{(n-1)!2^{n-1}}\sum_{J<I} (-1)^{|J|}
 \big(\log P_0(l_{n-1}(J))\big)^{(n-1)} \textrm{ (mod $\Delta_{L}(I)$)} \\
 & \equiv & 
\frac{(-1)^{n-1}}{(n-1)!2^{n-1}}\big(\log P_0(V_I^{x_I}(I))\big)^{(n-1)} \textrm{ (mod $\Delta_{L}(I)$)} \\
 & \equiv & 
x_I \textrm{ (mod $\Delta_{L}(I)$)},
\eeQ
where the second equality holds by Lemma \ref{lem:Pnonplanar}, and the third one follows from Equation (\ref{eq:pVI}). 

On the other hand, by Lemmas \ref{lem:1} and \ref{lem:2}, we have
 $$ \mu_{L}(I) =\mu_\si(I)= \mu_{l_k\cdot ...\cdot l_{n-1}}(I) 
   \equiv x_I \textrm{ (mod $\Delta_{L}(I)$)}, $$
which completes the proof.  

\subsection{Proof of Lemma \ref{lem:claim}}\label{sec:proofclaim}

First, it is convenient to slightly modify the string link $(l_k\cdot ...\cdot l_{n-2})_{R_1}$.  
For that purpose, we regard it as obtained from $\1_n$ by surgery along the disjoint union of tree claspers $G\cup R_1$, with 
$$ G := \bigcup_{i=k}^{n-2} \left(\bigcup_{J\in \mathcal{J}_i} T^{x_J}_J \right), $$   
where $T_J^{x_J}$ denotes $|x_J|$ parallel copies of $T_J$ (resp. $\overline{T}_J$) if $x_J>0$ (resp. if $x_J<0$).

A tree clasper for $\1_n$ is said to be \emph{in good position} if, in the usual diagram of $\1_n$, each component of 
$\1_n$ underpasses all edges of the tree clasper. 
For example, each component of $G$ is in good position (see Figure \ref{fig:TJ}), whereas the components of $R_1$ may not be. 
However, by repeated applications of \cite[Prop. 4.5]{H} we have 
 $$ (\1_n)_{G\cup R_1}\stackrel{C_n}{\sim} (\1_n)_{G\cup \tilde{R}}, $$
where $\tilde{R}$ is a disjoint union, disjoint from $G$, of 
simple tree claspers for $\1_n$ in good position and intersecting some component of $l_k\cdot ...\cdot l_{n-1}$ more than once.

We now close the string link $(\1_n)_{G\cup \tilde{R}}$ using the sequence $I=12...n$, as explained in Section \ref{sec:closesl}.  
It follows from Lemma \ref{lem:Pnonplanar} that, for all $J<I$, we have 
 $$ \big( \log P_0((\1_n)_{G\cup \tilde{R}}(J)\big)^{(n-1)} = \big( \log P_0( (\1_n)_{\tilde{G}\cup \tilde{R}}(J)\big)^{(n-1)}, $$
where 
 $$ \tilde{G} := \bigcup_{i=k}^{n-2} \left( \bigcup_{J\in \mathcal{J}_i\textrm{ ; }J<I} T^{x_J}_J \right). $$
In other words, we only need to consider those tree claspers $T_J$ and $\ov{T}_J$ with $J<I$, since only those become planar under closure. 
Moreover, since $\Delta_L(I)$ divides all $x_J$ with $J<I$, we can express each $T^{x_J}_J$ as a disjoint union of 
parallel trees with multiplicity $\Delta_L(I)$.  
The knot $(\1_n)_{\tilde{G}\cup \tilde{R}}(I)$ is obtained from $U$ by surgery along a disjoint union of tree claspers 
 $$ F:=t\cup r\quad \textrm{ ; }\quad t:=\bigcup_{i=1}^q t_i\quad,\quad r:=\bigcup_{j=1}^p r_j, $$
for some integers $q,p$, where $r$ is the image of $\tilde{R}$ under closure and where, for each $i=1,...,p$, 
the clasper $t_i$ is a parallel familly of $\Delta_L(I)$ copies of (the image under closure of) 
some $C_m$-tree $T_J$ or $\ov{T}_J$ with $J<I$ ($k\le m\le n-1$).

We need the following additional definition. 
A disjoint union $C_1\cup ...\cup C_s$ of $s\ge 1$ (possibly parallel) tree claspers 
of degree $<n$ 
for $U$ is called \emph{balanced} if each tree $C_j$ 
is being assigned a subset $w(C_j)$ of $\n$, called \emph{weight}, such that 
\begin{equation}\label{eq:balanced}
 (\1_n)_{\tilde{G}\cup \tilde{R}}(J)\stackrel{C_n}{\sim} U_{\big( \bigcup_{w(C_i)\subset \{J\}} C_i \big)},   
\end{equation}
for all $J<I$. 
For example, $F$ is balanced if we assign the index of each tree as weight.  
We say that a $C_k$-tree in a balanced family is \emph{repeated} if its weight has at most $k$ elements, 
that is, if its weight is smaller than the number of leaves.  
For instance, all tree claspers $r_j~(j=1,...,p)$ are repeated.

Now, \emph{up to $C_n$-equivalence}, we deform $U_F$ into a connected sum of 
knots obtained from $U$ by surgery along 
a single (possibly parallel) tree clasper. 
In other words, we will deform $F$ into a balanced union of \emph{localized} tree claspers for $U$, 
i.e. tree claspers sitting in a $3$-ball that intersects $U$ at a single strand and is disjoint from all other tree claspers.  
Since we started with tree claspers in good position for $\1_n$, 
this deformation can be achieved, starting from $F$, by a sequence of only leaf slides and edge crossing changes, see Figure \ref{fig:slide}. 
By Lemma \ref{lem:parallel}, performing such operations may introduce additionnal tree claspers up to $C_n$-equivalence.  
However, the following is easily verified. 

\begin{claim}\label{fact}
Let $M\cup T\cup T'$ be a balanced union of tree claspers for $U$, where $T\cup T'$ is as in Lemma \ref{lem:parallel}. 
If, in the statement of Lemma \ref{lem:parallel}(1) (resp. of Lemma \ref{lem:parallel}(2)),  
we assign the weights $w(T)$ and $w(T')$ to $\tilde{T}$ and $\tilde{T'}$ respectively, 
and the weight $w(T)\cup w(T')$ to $Y$ (resp. $H$) and each connected component of $C$, 
then $M\cup\tilde{T}\cup \tilde{T'}\cup Y\cup C$ (resp. $M\cup\tilde{T}\cup \tilde{T'}\cup H\cup C$) is balanced.  
%\\
In particular, if the degree of $T$ is at least $(n-1)/2$, where $n$ is the number of strands of $L$,  
then all tree claspers in $C$ are repeated. 
\end{claim}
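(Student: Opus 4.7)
The plan is to split the claim into two parts: balancedness of the new family, and the ``in particular'' statement on components of $C$ under the degree hypothesis. Both parts reduce to finite case analyses and counting arguments, with Lemma \ref{lem:parallel} as the only nontrivial ingredient.

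For balancedness, the approach is, for each $J<I$, a case analysis on the four possibilities for the containments $w(T)\subset\{J\}$ and $w(T')\subset\{J\}$. When both hold, all of $\tilde T,\tilde T', Y$ (or $H$), and every component of $C$ are included in the restricted new family; Lemma \ref{lem:parallel}, applied with the $M|_{\{J\}}$-surgery already in place, transforms surgery along $M|_{\{J\}}\cup T\cup T'$ into surgery along $M|_{\{J\}}\cup \tilde T\cup \tilde T'\cup Y\cup C$ (or its $H$-analogue), and combining this with the original balancedness of $M\cup T\cup T'$ gives the desired $C_n$-equivalence. When neither containment holds, both restrictions collapse to $U_{M|_{\{J\}}}$, so the equivalence is tautological. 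In the mixed cases, since the move in Lemma \ref{lem:parallel} modifies only the clasper $T'$ (so $\tilde T=T$ as an embedded surface), the subcase $w(T)\subset\{J\}\not\supset w(T')$ is immediate. In the remaining subcase $w(T')\subset\{J\}\not\supset w(T)$, the only difference between restricted surgeries is between $T'$ and $\tilde T'$; here one observes that, with $T$ absent from the surgery list, the leaf slide (resp.\ edge crossing change) taking $T'$ to $\tilde T'$ can be reversed by an ambient isotopy of $S^3$ fixing $U$ and the components of $M|_{\{J\}}$, by sliding the moved leaf or edge back along $U$ past the no-longer-obstructing intersection points of the leaves of $T$.

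For the ``in particular'' assertion, let $k=\deg T$ and $k'=\deg T'$. Each component of $C$ is a $C_{k+k'+1}$-tree (in case (1)) or a $C_{k+k'+2}$-tree (in case (2)) with weight $w(T)\cup w(T')$ of cardinality at most $(k+1)+(k'+1)=k+k'+2$. In case (2) this bound equals the degree, so every component is automatically repeated. In case (1) a component fails to be repeated exactly when $w(T)$ and $w(T')$ are disjoint of their maximal sizes $k+1$ and $k'+1$; in particular, neither $T$ nor $T'$ is then repeated. Under the hypothesis $k\ge (n-1)/2$, the approach is then to argue inductively along the deformation of Lemma \ref{lem:claim} that every non-repeated tree produced during the deformation has degree $\ge (n-1)/2$: this is true initially for the trees in $\tilde G$ (degree $\ge k$) and is preserved by the $Y$ and $H$ constructions, which raise degrees additively. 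Consequently $k'\ge (n-1)/2$, so the $C$-tree has degree $k+k'+1\ge n$, which places it outside the range of the balanced family and allows it to be discarded modulo $C_n$-equivalence.

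The main difficulty I expect lies in the mixed subcase $w(T')\subset\{J\}\not\supset w(T)$ of the balancedness argument, where one must make rigorous the claim that removing $T$ from the surgery list renders the leaf slide (or edge crossing change) reversible by an ambient isotopy fixing $U$. This parallels the type of argument developed by Habiro in \cite[Prop.~4.4,~4.6]{H}, whose proofs of Lemma \ref{lem:parallel} implicitly encode the local moves needed here.
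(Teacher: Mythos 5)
Your verification of the balancedness half is correct and is the natural one: fixing $J<I$, the only case needing an argument is $w(T')\subset\{J\}\not\supset w(T)$, and there, once $T$ is dropped from the surgery data, the leaf slide (resp.\ edge crossing change) relating $T'$ to $\tilde{T'}$ is realized by an ambient isotopy preserving $U$ and avoiding $M|_{\{J\}}$, so the restricted surgeries agree; the remaining cases are immediate or follow from Lemma \ref{lem:parallel} applied in the presence of the surgery along $M|_{\{J\}}$.

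The ``in particular'' half, however, has a gap as written. First, you bound $|w(T)\cup w(T')|$ by the ``maximal sizes'' $(k+1)+(k'+1)$; but balancedness places no constraint at all on weights, so the inequalities $|w(T)|\le k+1$ and $|w(T')|\le k'+1$ are additional invariants of the localization process that you never state or verify (they do hold, but for the parallel trees $Y$ and $H$ only because their degree is at least $n-1$, so the invariant must be carried through the deformation explicitly). Second, and more seriously, in the one case you identify as problematic (case (1) with $w(T)$ and $w(T')$ disjoint of maximal size) your conclusion is that the components of $C$ have degree $\ge n$ and may be ``discarded modulo $C_n$-equivalence'' --- which is not the statement of the Claim, namely that every component of $C$ is \emph{repeated}. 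In fact your own inequalities show this case is vacuous: disjoint weights of sizes $k+1$ and $k'+1$ inside $\{1,\dots,n\}$ force $k+k'+2\le n$, while your induction gives $k,k'\ge (n-1)/2$, i.e.\ $k+k'+1\ge n$, a contradiction. The clean way to close the argument is to combine your induction with the trivial bound $|w(T)\cup w(T')|\le n$: if $T'$ is not repeated then $\deg T'\ge (n-1)/2$, so $|w(T)\cup w(T')|\le n\le k+k'+1$, the smallest possible degree of a component of $C$; if $T'$ is repeated then $|w(T)\cup w(T')|\le |w(T)|+|w(T')|\le (k+1)+k'=k+k'+1$ (this is where $|w(T)|\le k+1$ is genuinely needed). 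Either way each component of $C$ is repeated, which is what the Claim asserts and what the proof of Lemma \ref{lem:claim} actually uses.
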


We now start our localization process, which goes in three steps.

The first step consists in localizing all parallel trees $t_i$.  
Consider, say, the parallel $C_{k_1}$-tree $t_1$.  
Then by assumption we have that $k_1\ge (n-1)/2$, 
and Lemma~\ref{lem:parallel} and Claim~\ref{fact} imply that 
$U_F\stackrel{C_n}{\sim} U_{t_1}\sharp U_{(F\setminus t_1)\cup F_1}$, 
where $F_1$ is a disjoint union of tree claspers of degree $> k_1$, which are either repeated trees 
or parallel trees with multiplicity $\Delta(I)$.
Using this argument repeatedly, we see that $F$ can be be deformed into a balanced union of tree claspers 
$F'=h_1\cup...\cup h_l\cup r'$, for some integer $l$, 
where $r'$ is a disjoint union of repeated trees and each $h_i$ is a parallel tree with multiplicity $\Delta_{L}(I)$,  
such that 
$$ (\1_n)_{\tilde{G}\cup \tilde{R}}(I)=U_F\stackrel{C_n}{\sim} U_{F'}=U_{h_1}\sharp ...\sharp U_{h_l} \sharp U_{r'}. $$  

In the second step, we ``split'' each parallel tree into $\Delta_L(I)$ localized ones.  
Indeed, since each $h_i$ is a parallel family of $\Delta_{L}(I)$ copies of some tree clasper $h'_i$, 
we can apply Lemma~\ref{lem:parallel} (with $m=1$) and Claim~\ref{fact}  
to deform $F'$ into a balanced family
  $$ F''=\bigcup_{i=1}^l (\underbrace{h'_i\cup...\cup h'_i}_{\textrm{$\Delta_{L}(I)$ times}})\cup r'', $$
where for each $i$ the tree clasper $h'_i$ has weight 
$w(h'_i)=w(h_i)$ and where $r''$ is a disjoint union of repeated trees, such that 
$$ U_{F'}\stackrel{C_n}{\sim} U_{F''}=
 \big(\Delta_{L}(I)\times U_{h'_1}\big)\sharp ...
 \sharp \big( \Delta_{L}(I)\times U_{h'_l}\big) \sharp U_{r''}.  
$$
(Here $\Delta_{L}(I)\times U_{h'_i}$ denotes the connected sum of $\Delta_{L}(I)$ copies of $U_{h'_i}$ ; $(i=1,...,l)$.

In the third and last step, we localize all repetead trees in $r''$.  
Note that, by Claim~\ref{fact}, performing a leaf slide or an edge crossing change between two repeated tree claspers 
only introduces new tree claspers that are also repeated. 
Hence $F''$ can be deformed into a balanced disjoint union of tree claspers
 $$ X = \bigcup_{i=1}^l (\underbrace{h'_i\cup...\cup h'_i}_{\textrm{$\Delta_{L}(I)$ times}})\cup \bigcup_{j=1}^{l'}  x_j, $$ 
for some integer $l'$, where each $x_j$ is a repeated tree clasper, such that 
\begin{equation}\label{eq:local}
 U_{F''}\stackrel{C_n}{\sim} U_{X}=
 \big(\Delta_{L}(I)\times U_{h'_1}\big)\sharp\cdots
 \sharp \big( \Delta_{L}(I)\times U_{h'_l}\big) \sharp U_{x_1}\sharp\cdots\sharp U_{x_{l'}}.  
\end{equation} 
This concludes the localization process.

Now, since $X$ is balanced, and since $\log P_0$ is additive under connected sum (see Section \ref{sec:homfly}), 
for any $J<I$ we have 
 \[\begin{array}{l}
(\log P_0((\1_n)_{\tilde{G}\cup \tilde{R}}(J)))^{(n-1)}\\
\displaystyle \hspace*{2cm}=\Delta_{L}(I)\sum_{w(h'_i)\subset \{J\}}(\log P_0( U_{h'_i}))^{(n-1)}
+\sum_{w(x_j)\subset \{J\}}(\log P_0( U_{x_j}))^{(n-1)},
\end{array}\]
where the first (resp. second) sum is over all tree claspers $h'_i$ (resp. $x_j$) 
whose weight is contained in $\{ J \}$.  
On the other hand, we have the following
 
\begin{claim}\label{claim:R}
Let $g$ be a connected component of $X$. \\
(1)~If $|w(g)|<n(=|I|)$, then 
$$ \sum_{J<I\textrm{ ; } w(g)\subset\{J\}} 
(-1)^{|J|}
(\log P_0 (U_g))^{(n-1)}=0. $$
(2)~If $|w(g)|=n$ (i.e. $g=h'_i$ for some $i$ and $g$ is a $C_{n-1}$-tree), then
 $$ (\log P_0 (U_g))^{(n-1)}\equiv 0~~ \text{mod}~~(n-1)!2^{n-1}. $$
 \end{claim}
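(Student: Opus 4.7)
The plan is to treat the two parts of the claim separately, since they involve different techniques.

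For Part~(1), the quantity $(\log P_0(U_g))^{(n-1)}$ is independent of $J$, so I would pull it out of the sum. Because $I = 12\cdots n$ has no repeated indices, subsequences $J<I$ correspond bijectively to subsets $\{J\} \subseteq \{1,\ldots,n\}$ with $|J|=|\{J\}|$. Writing $S=\{J\}$, the sum reduces to
\[
  (\log P_0(U_g))^{(n-1)} \cdot \sum_{w(g) \subseteq S \subseteq \{1,\ldots,n\}} (-1)^{|S|}.
\]
Decomposing $S = w(g) \cup T$ for $T \subseteq \{1,\ldots,n\} \setminus w(g)$, the combinatorial sum becomes $(-1)^{|w(g)|} \sum_T (-1)^{|T|}$, which vanishes because $|w(g)| < n$ forces the complement of $w(g)$ to be nonempty.

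For Part~(2), I would first reduce the claim to a statement about $P_0^{(n-1)}$. Since $g$ is a single $C_{n-1}$-tree on the unknot, $U_g$ is $C_{n-1}$-equivalent to $U$, and therefore $P_0^{(k)}(U_g) = 0$ for all $k < n-1$. Plugging this into the expansion $(\log P_0)^{(n-1)} = P_0^{(n-1)} + \sum n_{(k_1,\ldots,k_m)} P_0^{(k_1)} \cdots P_0^{(k_m)}$ from Section~\ref{sec:intro}, every mixed term contains a factor $P_0^{(k_i)}$ with $k_i \leq n-3 < n-1$ and vanishes, leaving $(\log P_0(U_g))^{(n-1)} = P_0^{(n-1)}(U_g)$. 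Likewise, the connected-sum formula quoted before (\ref{eq:nadditivity}) collapses to additivity of $P_0^{(n-1)}$ on any pair of knots whose $P_0^{(k)}$ vanish for $k < n-1$.

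The heart of the argument is then to reduce the tree clasper $g$ modulo $C_n$-equivalence to a collection of localized \emph{linear} $C_{n-1}$-trees on the unknot. Using the IHX and antisymmetry relations for tree claspers, together with the leaf-slide and edge-crossing moves of Lemma~\ref{lem:parallel}, I would express $U_g$ as a connected sum (up to $C_n$-equivalence) of knots $U_{g_1}^{\pm 1} \sharp \cdots \sharp U_{g_s}^{\pm 1}$ in which each $g_i$ is a localized linear $C_{n-1}$-tree on $U$ with $n$ leaves on a single strand. By Lemma~\ref{lem:homfly_planar}, each planar $g_i$ gives $P_0^{(n-1)}(U_{g_i}) = \pm (n-1)!\,2^{n-1}$; by Lemma~\ref{lem:Pnonplanar}, each non-planar $g_i$ gives $P_0^{(n-1)}(U_{g_i}) = 0$. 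Combined with the additivity established above, $P_0^{(n-1)}(U_g)$ is an integer combination of such values and hence divisible by $(n-1)!\,2^{n-1}$.

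The main obstacle lies in this IHX-based reduction. One must verify that all correction trees introduced by the slide and edge-crossing moves have degree $\geq n$ and therefore contribute nothing modulo $C_n$-equivalence, and one must track carefully the signs and mirror images that appear when passing a tree through each IHX split. This bookkeeping is standard clasper calculus in the style of Habiro~\cite{H}, but it is the only technically delicate point of the argument.
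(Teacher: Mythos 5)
Your proposal is correct and follows essentially the same route as the paper: part (1) is the same alternating-sum cancellation over subsequences containing $w(g)$ (the paper pairs subsequences via an index $a\notin w(g)$, which is your inclusion--exclusion in disguise), and part (2) is the same AS/IHX reduction, up to $C_n$-equivalence, of $U_g$ to a connected sum of knots surgered along linear $C_{n-1}$-trees that are either non-planar (Lemma \ref{lem:Pnonplanar}) or of the standard planar form (Lemma \ref{lem:homfly_planar}), combined with additivity and $C_n$-invariance of the degree-$(n-1)$ invariant. Your intermediate observation that $(\log P_0(U_g))^{(n-1)}=P_0^{(n-1)}(U_g)$ for such $g$ is a harmless extra step consistent with the paper's use of $\log P_0$.
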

Note that, since any connected component $g$ of $X$ has degree $<n$, we have that $|w(g)|<n$ if $g$ is repeated.  
Hence it follows from Claim \ref{claim:R} that 
 \[ 
\frac{-1}{(n-1)!2^{n-1}}\sum_{J<I} (-1)^{|J|}
 (\log P_0((\1_n)_{\tilde{G}\cup \tilde{R}}(J)))^{(n-1)} \equiv 0 \textrm{ (mod $\Delta_{L}(I)$)}, \]
which concludes the proof of Lemma~\ref{lem:claim}.  

\begin{proof}[Proof of Claim \ref{claim:R}]
(1)~Since $w(g)<n$, there is an element $a\in\{I\}$ such that 
$a\notin w(g)$. We denote by $I\setminus a$ the sequence obtained from $I$ by deleting $a$. Then we have that 
$$ 
\sum_{J<I\textrm{ ; } w(g)\subset\{J\}}  
(-1)^{|J|}
=
\sum_{J<I\setminus a\textrm{ ; } w(g)\subset\{J\}}
(-1)^{|J|}
 +
 \sum_{J<I\setminus a\textrm{ ; } w(g)\subset\{J\}\cup\{a\}}
(-1)^{|J|+1}
 = 0,
$$
which implies the desired equality.

(2)~Using the AS and IHX relations for tree claspers (see \cite{G,H}), one can check that the knot $U_g$ is $C_n$-equivalent 
to a connected sum of knots $U_{g_i}$, where each $g_i$ is a linear
$C_{n-1}$-tree which is either non-planar or of the form shown in Figure \ref{fig:knclasper}. 
Since $(\log P_0)^{(n-1)}$ is an invariant of $C_n$-equivalence, 
the result then follows from Lemmas \ref{lem:homfly_planar} and \ref{lem:Pnonplanar}. 
\end{proof}

\section{First non-vanishing Milnor invariants and link-homotopy of string links}\label{sec:log}

We begin this section by proving Theorem \ref{thm:main2}.  
Most of the arguments follow very closely the proof of Theorem \ref{thm:main}, 
and we therefore freely use the notions and results of the previous section.  

\subsection{Proof of Theorem \ref{thm:main2}}

Let $L=\bigcup_{i=1}^n L_i$ be an $n$-component link in $S^3$ with vanishing 
Milnor link-homotopy invariants of length up to $k$ $(3\leq k+1\leq n)$.  
Let $I$ be a sequence of $(k+1)$ distinct elements of $\{1,...,n\}$. 
As in Section \ref{sec:proof}, we may assume without loss of generality that $k+1=n$ and that $I=12...n$.  
Following Section \ref{sec:closesl}, we may also assume that the $2n$-gon $B_I$ is chosen so that $L\cup B_I$ lies in the cylinder 
$D^2\times [0,1]$,   
such that $B_I\subset (D^2\times \{ 0 \})$ is as shown in Figure \ref{fig:disk}.   
Hence (\ref{eq:defsigma}) defines an $n$-string link $\si$ whose closure is $L$. 

By Theorem~\ref{thm:repr}, the $n$-string link $\si$ is link-homotopic to 
$l_{n-1}=\prod_{J\in \mathcal{J}_{n-1}} V_J^{x_J}$ defined in Section \ref{sec:results_milnor}. 
By applying the exact same arguments as in Section \ref{sec:proofclaim}, 
there exists a disjoint union of tree claspers $R=r_1\cup ...\cup r_p$, 
with each being assigned a weight $w(r_i)\subset \{ 1,...,n \}$, 
such that 
\begin{itemize}
 \item for each $i$, we have $|w(r_i)|\le \deg(r_i)$, 
 \item $L_J\stackrel{C_{n}}{\sim} l_{n-1}(J)\sharp U_{R_J}$ for all $J<I$,
where $R_J=(\bigcup_{w(r_i)\subset J}{r_i})$. 
       (In particular, $R_I=R$ and $L_I$ is $C_{n}$-equivalent to  $l_{n-1}(I) \sharp U_{R}$.)
\end{itemize}
Since 
$l_{n-1}$ is $C_{n-1}$-equivalent to $\1_n$, 
we have by Equation (\ref{eq:nadditivity}) that, for all $J<I$,  
$$P_0^{(n-1)}(L_J)=P_0^{(n-1)}(l_{n-1}(J))+P_0^{(n-1)}(U_{R_J}).$$

The following claim is proved below.
\begin{claim}\label{claim:stringR}
$$\sum_{J<I}
(-1)^{|J|}
P_0^{(n-1)}(U_{R_J})=0.$$
\end{claim}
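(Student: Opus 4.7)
The plan is to recast the alternating sum as a statement about the polynomial $P_0(t)$ itself and to exploit the fact that $P_0$ is multiplicative under connected sum together with a simple inclusion-exclusion identity.

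First I would use the structure of $R$ produced by the arguments of Section~\ref{sec:proofclaim}: each $r_i$ is localized in its own $3$-ball on the unknot and has weight $w(r_i)\subset\{1,\dots,n\}$ with $|w(r_i)|\le\deg(r_i)$. Hence for every $J<I$ we have $U_{R_J}=\#_{i\,:\,w(r_i)\subset\{J\}} U_{r_i}$, so by multiplicativity of $P_0$ under connected sum
\[
P_0(U_{R_J};t) \;=\; \prod_{i\,:\,w(r_i)\subset\{J\}} P_0(U_{r_i};t).
\]
Set $y_i(t):=P_0(U_{r_i};t)-1$. Since $U_{r_i}$ is $C_{\deg(r_i)}$-equivalent to the unknot and $P_0^{(\ell)}$ is an invariant of $C_{\ell+1}$-equivalence, all derivatives $P_0^{(\ell)}(U_{r_i})$ for $1\le\ell<\deg(r_i)$ vanish, so $y_i(t)$ has a zero of order at least $\deg(r_i)$ at $t=1$.

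Next, I would pass to the generating function $F(t):=\sum_{J<I}(-1)^{|J|}P_0(U_{R_J};t)$ and show that it vanishes to order $n$ at $t=1$, which immediately gives $F^{(n-1)}(1)=0$. Since $I=12\cdots n$, subsequences $J<I$ correspond to subsets $S=\{J\}\subset\{1,\dots,n\}$ with $|J|=|S|$. Expanding the product $\prod (1+y_i)$ and swapping orders of summation yields
\[
F(t) \;=\; \sum_{T\subset\{1,\dots,p\}}\Bigl(\prod_{i\in T} y_i(t)\Bigr)\!\!\sum_{\substack{S\subset\{1,\dots,n\}\\ S\supset \bigcup_{i\in T} w(r_i)}}\!\!(-1)^{|S|}.
\]
A standard inclusion-exclusion computation shows the inner sum equals $(-1)^n$ if $\bigcup_{i\in T}w(r_i)=\{1,\dots,n\}$ and vanishes otherwise.

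Finally, I would use the weight inequality to bound the order of vanishing of the surviving terms. For any $T$ with $\bigcup_{i\in T} w(r_i)=\{1,\dots,n\}$,
\[
\sum_{i\in T}\deg(r_i) \;\ge\; \sum_{i\in T}|w(r_i)| \;\ge\; \Bigl|\bigcup_{i\in T} w(r_i)\Bigr| \;=\; n,
\]
so $\prod_{i\in T}y_i(t)=O((t-1)^n)$. Thus $F(t)=O((t-1)^n)$ and differentiating $n-1$ times at $t=1$ gives the claim.

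The main obstacle in this plan is not really a calculation, but rather verifying the first step: namely, that the $R$ delivered by ``the exact same arguments as in Section~\ref{sec:proofclaim}'' can indeed be taken to be a balanced disjoint union of localized tree claspers with $|w(r_i)|\le\deg(r_i)$, so that $U_{R_J}$ is genuinely a connected sum $\#_{w(r_i)\subset\{J\}}U_{r_i}$. Once that structural property is in hand, the multiplicativity of $P_0$ turns the alternating-sum identity into an elementary inclusion-exclusion bookkeeping, and the "repeated" bound $|w(r_i)|\le\deg(r_i)$ is exactly what converts the covering condition $\bigcup w(r_i)=\{1,\dots,n\}$ into the order-of-zero estimate needed to kill the $(n-1)$-th derivative.
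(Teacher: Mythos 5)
Your proof is correct in substance, but it takes a genuinely different route from the paper, and it hinges on the structural input you yourself flag: you need $U_{R_J}$ to split as the connected sum $\sharp_{\,w(r_i)\subset\{J\}}U_{r_i}$, i.e.\ the trees $r_i$ must be localized, each in a ball meeting $U$ in a single strand and disjoint from the others. That input is in fact available: the ``exact same arguments as in Section~\ref{sec:proofclaim}'' invoked to produce $R$ include the localization process of that section, whose third step localizes all repeated trees while preserving weights and balancedness; alternatively one can localize any disjoint union of repeated trees afterwards by leaf slides and edge crossing changes (Lemma~\ref{lem:parallel} together with Claim~\ref{fact}), which creates only further repeated trees and changes the knots only up to $C_n$-equivalence --- harmless here, since $P_0^{(n-1)}$ is an invariant of $C_n$-equivalence. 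Granting this, your computation is sound: multiplicativity of $P_0$ under connected sum, the inclusion--exclusion identity isolating the families $T$ with $\bigcup_{i\in T}w(r_i)=\{1,\dots,n\}$, and the estimate that $y_i(t)=P_0(U_{r_i};t)-1$ vanishes at $t=1$ to order at least $\deg(r_i)\ge|w(r_i)|$ give $F(t)=O\bigl((t-1)^n\bigr)$, hence $F^{(n-1)}(1)=0$.

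The paper proves the claim differently and avoids the splitting issue altogether. Using only that $R$ is a disjoint union of trees with $|w(r_i)|\le\deg(r_i)$, it attaches to each index $j\in w(r_i)$ the set of crossings where the component $\beta^i_0$ of the Brunnian tangle obtained by surgery on $r_i$ underpasses $\beta^i_j$; switching the crossings indexed by $\{J\}$ then undoes precisely those trees whose weight meets $\{J\}$, so the alternating sum $\sum_{J<I}(-1)^{|J|}U_{R_J}$ is rewritten as an alternating sum over crossing changes on $n$ sets of crossings, i.e.\ a linear combination of $n$-singular knots, which is annihilated by $P_0^{(n-1)}$ simply because it is a finite type invariant of degree $n-1$. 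The paper's argument is thus more robust: it uses nothing about $P_0$ beyond its finite-type degree and would apply verbatim to any invariant of degree at most $n-1$, and it needs no connected-sum decomposition. Your argument, once localization is granted, is more elementary and exploits the specific algebra of $P_0$ (multiplicativity and order of vanishing), and it yields slightly more, namely that the whole alternating sum of the coefficient polynomials of the model connected sums vanishes to order $n$ at $t=1$ rather than just its $(n-1)$st derivative.
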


This and Equation (\ref{eq:pVI}) imply that 
\beQ
\displaystyle\frac{-1}{(n-1)!2^{n-1}}\sum_{J<I}
(-1)^{|J|} P_0^{(n-1)}(L_J) & 
= & \displaystyle\frac{-1}{(n-1)!2^{n-1}}\sum _{J<I}
(-1)^{|J|}P_0^{(n-1)}(l_{n-1}(J)) \\
 & = & \displaystyle\frac{-1}{(n-1)!2^{n-1}}\sum _{J<I}
(-1)^{|J|} P_0^{(n-1)}(V_I^{x_I}(J)) \\
 & = & \displaystyle\frac{(-1)^{n-1}}{(n-1)!2^{n-1}}P_0^{(n-1)}(V_I^{x_I}(I))=x_I 
\eeQ
where the second equality follows from  Lemma \ref{lem:Pnonplanar}. 
Lemma~\ref{lem:1} completes the proof.

\begin{proof}[Proof of Claim \ref{claim:stringR}]

We will show that the alternate sum 
$$\sum_{J<I} (-1)^{|J|} U_{R_J}$$
is a linear combination of singular knots with $n$ double points.  
Since $P^{(n-1)}_0$ is a finite type invariant of degree $n-1$, this implies Claim~\ref{claim:stringR}.  

We may assume without loss of generality that $\bigcup_{i} w(r_i)=\n$. 
Indeed, if there exists some $j\in \n$ such that $j\notin w(r_i)$ for all $i$, 
we can freely add a $C_1$-tree $c_j$ with weight $\{j\}$ such that 
$U_{R\cup c_j}=U_{R}\sharp U_{c_j}=U_{R}\sharp U$.   

For each $i=1,...,p$, let deg$(r_i)=d_i$.
Consider the $(d_i+1)$-component trivial tangle which is the intersection of $U$ with a regular neighborhood of $r_i$. 
Then surgery along $r_i$ yields a $(d_i+1)$-component tangle
 $ \beta^i=\beta^i_0\cup...\cup \beta^i_{d_i}$. 
Note that $\beta^i$ is a Brunnian tangle \cite{H}.  Since $\beta^i\setminus \beta^i_0$ is trivial, 
there is a diagram of $\beta^i$ such that, for all $u=1,...,d_i$, 
the component $\beta^i_u$ is a trivial arc that only crosses component $\beta^i_0$. 
Fix a diagram of $U_{R}$ that satisfies this condition for all $i=1,...,p$.  
Now, let $w(r_i)=\{j_1,...,j_{m_i}\}\subset \n$, with $m_i\leq d_i$, and 
for all $u\in \{1,..,m_i\}$.
Set 
\[
 \mathcal{S}_i(j_u):=
  \textrm{the set of all crossings where $\beta^i_0$ underpasses $\beta^i_u$ }.
\]
Note that this is only possible because $r_i$ satisfies $m_i=|w(r_i)|\le d_i$.  
For all $j\in \{1,..,n\}$, set 
$$ \mathcal{S}(j) = \bigcup_i \mathcal{S}_i(j). $$

For any $J<I$, denote by $U_{R}[J]$ the knot obtained from $U_{R}$ by switching all crossings in 
$\bigcup_{j\in \{J\}} \mathcal{S}(j)$.  
Then $U_{R}[J]$ is obtained from $U$ by surgery along all $r_i$ such that $w(r_i)\cap \{J\}=\emptyset$, i.e.,    
 $$ U_{R}[J]=U_{R_{I\setminus J}} $$
 for any $J<I$, where $I\setminus J$ denotes the sequence obtained from $I$ by deleting all $j\in \{J\}$. 
(In particular, we have $U_{R}[\emptyset]=U_{R}$.) 
Hence we have 
 $$ \sum_{J<I} (-1)^{|J|} U_{R_J} = \sum_{J<I} (-1)^{|J|} U_{R}[I\setminus J]=\sum_{J<I} (-1)^{n-|J|} U_{R}[J], $$
Clearly the alternate sum on the right-hand side, which involves knots that differ from one another by crossing changes on $n$ sets of crossings, 
can be written as a linear combination of singular knots with $n$ double points. 
This completes the proof of Claim \ref{claim:stringR}.  
\end{proof}

\subsection{Link-homotopy of string links}\label{sec:link-homotopy}

In this section, we give several interesting consequences of Theorem \ref{thm:main2} for Milnor invariants of string links.  

We first define an analogue for string links of the band sum operations on links given in the introduction.
Let $L$ be an $n$-string link. 
Recall from Sections \ref{sec:sl} and \ref{sec:closesl}
that, for each $i=1,...,n$, we pick a point $y_i\in \partial D^2$ 
and thus have a segment $p_i=x_iy_i\subset D^2$ (see Figure \ref{fig:disk}). 
Recall also that the closure of $L$ is defined by 
$\hat{L}=\bigcup_{i=1}^n \hat{L_i}=L\cup(\bigcup_{i=1}^n(p_i\times\{0,1\})\cup(y_i\times I))$.   

Let $I=i_1i_2...i_{m+1}$ be a sequence of $m+1$ distinct integers in $\{1,...,n\}$. 
We choose a $2(m+1)$-gon $B_I$ in $R^2\times(-\infty,0]$ such that 
$B_I\cap (R^2\times\{0\})=\bigcup_{i\in I} (p_{i}\times\{0\})$ is a set of 
$m+1$ non-adjacent edges and $p_{i_1}\times\{0\},...,p_{i_{m+1}}\times\{0\}$ 
appear in this order along the oriented boundary of $B_I$. 
As in the introduction, for any subsequence $J$ of $I$, 
we can define an oriented knot $\hat{L}_J$ as the 
closure of $((\bigcup_{j\in \{J\}} \hat{L}_j)\cup\partial B_I)\setminus ((\bigcup_{j\in \{J\}} \hat{L}_j)\cap B_I)$. 

Set 
\[f_{B_I}(L)=\frac{-1}{m!2^{m}}\sum_{J<I}(-1)^{|J|} P_0^{(m)}(\hat{L}_J).\]
This function depends on the choice of $B_I$. 
Hence, for every nonrepeated sequence $I$, we choose $B_I$ and fix it, to obtain an invariant of string links $f_{B_I}$. 

The following is a string link version of Theorem \ref{thm:main2}. 
\begin{theorem}\label{thm:stringlink}
Let $L$ be an $n$-string link with vanishing Milnor link-homotopy invariants of length $\le k$ ($3\le k+1\le n$).  
Then for any sequence $I$ of length $k+1$ without repeated indices, we have 
$\mu_L(I)=f_{B_I}(L)$. 
\end{theorem}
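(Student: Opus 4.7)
The plan is to derive Theorem \ref{thm:stringlink} as an almost immediate corollary of Theorem \ref{thm:main2}, applied to the closure $\hat{L}$ of the string link $L$. First I would observe that the $2(k+1)$-gon $B_I$ chosen in $R^2\times(-\infty,0]$ provides a valid $I$-fusion disk for the link $\hat{L}$ in the sense of the introduction: its intersection with $\hat{L}$ consists of the non-adjacent edges $p_{i_1}\times\{0\},\ldots,p_{i_{k+1}}\times\{0\}$ lying on the respective components $\hat{L}_{i_1},\ldots,\hat{L}_{i_{k+1}}$, with the required opposite orientation guaranteed by the stipulation that these edges appear in the prescribed cyclic order along $\partial B_I$. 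Consequently the knots $\hat{L}_J$ appearing in the definition of $f_{B_I}(L)$ are precisely the ones to which Theorem \ref{thm:main2} applies.

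The second step is to establish that the indeterminacy $\Delta_{\hat{L}}(I)$ vanishes. Since $I$ has no repeated indices, every sequence arising in the definition of $\Delta_{\hat{L}}(I)$ (obtained from $I$ by removing at least one index and permuting the remainder cyclicly) is a nonrepeated sequence of length at most $k$. By the hypothesis that the Milnor link-homotopy invariants of $L$ of length up to $k$ vanish, all corresponding integer-valued string link invariants $\mu_L(J)$ are zero, and therefore so are the residue classes $\overline{\mu}_{\hat{L}}(J)$. The gcd defining $\Delta_{\hat{L}}(I)$ is thus zero, so $\overline{\mu}_{\hat{L}}(I)$ is a well-defined integer, and since $\mu_L(I)$ is by construction a specific integer lift of the coset $\overline{\mu}_{\hat{L}}(I)$, these two integers coincide.

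Applying Theorem \ref{thm:main2} to $\hat{L}$ with the $I$-fusion disk $B_I$ then yields
\[
\mu_L(I) \;=\; \overline{\mu}_{\hat{L}}(I) \;=\; \frac{-1}{k!\,2^{k}}\sum_{J<I}(-1)^{|J|}P_{0}^{(k)}(\hat{L}_J) \;=\; f_{B_I}(L),
\]
concluding the proof. The only genuine verification in this argument is the vanishing of $\Delta_{\hat{L}}(I)$; once that is in place, the result is essentially a reformulation of Theorem \ref{thm:main2} in the language of string links. The main obstacle, such as it is, is therefore purely bookkeeping: one must carefully match the conventions for $B_I$ and for $\hat{L}_J$ used in Section \ref{sec:log} with those used in the introduction, and confirm that the hypothesis of vanishing link-homotopy invariants for the string link $L$ translates into the corresponding hypothesis for the link $\hat{L}$ required by Theorem \ref{thm:main2}.
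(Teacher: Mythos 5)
Your proposal is correct and follows essentially the same route as the paper: close the string link, note that $\overline{\mu}_{\hat{L}}(J)=0$ for all nonrepeated $J$ of length $\le k$ and $\overline{\mu}_{\hat{L}}(I)=\mu_L(I)$, and apply Theorem \ref{thm:main2} to $\hat{L}$ with the chosen fusion disk. Your extra verifications (that $B_I$ is a legitimate $I$-fusion disk and that $\Delta_{\hat{L}}(I)=0$) are just the details the paper leaves implicit.
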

\begin{proof}
Let $\hat{L}$ be the closure of $L$. 
Then $\overline{\mu}_{\hat{L}}(J)=0$ for all sequence $J$ of length $\le k$ without repeated indices, and 
$\overline{\mu}_{\hat{L}}(I)=\mu_{L}(I)$. 
The result then follows immediately from Theorem \ref{thm:main2}. 
\end{proof}

We now show how to use the $P_0$ polynomial to distinguish string links up to link-homotopy.  

\begin{corollary}\label{cor:logvanish2}
Two $n$-string links $L$ and $L'$ are link-homotopic if and only if 
they have same linking numbers and 
$f_{B_I}(L\cdot\overline{L'})=0$ for all nonrepeated sequences $I$, where 
$\overline{L'}$ denotes the horizontal mirror image of $L'$ with the orientation reversed.  
\end{corollary}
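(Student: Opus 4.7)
The plan is to reduce the corollary to a vanishing-of-Milnor-invariants statement via Habegger--Lin's link-homotopy classification of string links, and then apply Theorem~\ref{thm:stringlink} inductively.

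First I would note that, since $\overline{L'}$ is the string-link inverse of $L'$ up to link-homotopy (the horizontal mirror with reversed orientation is precisely the stacking-monoid inverse in the link-homotopy setting), we have $L\simeq L'$ if and only if $L\cdot\overline{L'}\simeq \1_n$. By the Habegger--Lin classification, the latter is equivalent to the vanishing of $\mu_{L\cdot\overline{L'}}(I)$ for every nonrepeated sequence $I$. So the whole problem reduces to showing that the stated hypotheses are equivalent to the vanishing of all nonrepeated Milnor invariants of $L\cdot\overline{L'}$.

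The $(\Rightarrow)$ direction is essentially free: if $L\simeq L'$ then $L\cdot\overline{L'}\simeq\1_n$, so all Milnor link-homotopy invariants of $L\cdot\overline{L'}$ vanish; in particular $L$ and $L'$ have the same linking numbers (the length-2 Milnor invariants being additive under stacking and changing sign under inverse), and for every nonrepeated $I$ of length $m+1\in[3,n]$, Theorem~\ref{thm:stringlink} identifies $f_{B_I}(L\cdot\overline{L'})$ with $\mu_{L\cdot\overline{L'}}(I)=0$.

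For the $(\Leftarrow)$ direction I would induct on the length $|I|$ of a nonrepeated sequence $I$ to show $\mu_{L\cdot\overline{L'}}(I)=0$. The base case $|I|=2$ is exactly the assumption that the linking numbers of $L$ and $L'$ agree. For the inductive step, assuming all nonrepeated Milnor invariants of $L\cdot\overline{L'}$ of length at most $k$ vanish (with $3\le k+1\le n$), Theorem~\ref{thm:stringlink} applies to $L\cdot\overline{L'}$ and yields $\mu_{L\cdot\overline{L'}}(I)=f_{B_I}(L\cdot\overline{L'})=0$ for any nonrepeated $I$ of length $k+1$. Hence every nonrepeated Milnor invariant of $L\cdot\overline{L'}$ vanishes, and by Habegger--Lin $L\cdot\overline{L'}\simeq\1_n$, so $L\simeq L'$.

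The only real point of care -- and the main potential obstacle -- is the very first reduction: one must verify that ``horizontal mirror with reversed orientation'' gives a link-homotopy inverse in the stacking monoid (so that $L\simeq L'\iff L\cdot\overline{L'}\simeq\1_n$), and that the length-2 Milnor invariants of $L\cdot\overline{L'}$ are precisely the pairwise differences of linking numbers between $L$ and $L'$. Both facts are standard but worth stating explicitly, since the whole induction rests on the base case provided by the linking-number hypothesis and on the monoid inverse that turns the classification into a vanishing condition.
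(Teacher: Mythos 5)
Your proposal is correct and follows essentially the same route as the paper: reduce to the vanishing of all Milnor link-homotopy invariants of $L\cdot\overline{L'}$ via the inverse in the link-homotopy monoid and the Habegger--Lin classification, then apply Theorem~\ref{thm:stringlink} (inductively on the length of $I$, which the paper leaves implicit). The one ``point of care'' you flag is settled in the paper exactly as you suspect: $\overline{L'}$ is the concordance inverse of $L'$, and concordance implies link-homotopy (Giffen, Goldsmith), so $L$ and $L'$ are link-homotopic if and only if $L\cdot\overline{L'}$ is link-homotopic to $\1_n$.
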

\begin{proof} 
The string link $\overline{L'}$ is the inverse of $L'$ under concordance, i.e. $L'\cdot\overline{L'}$ is 
concordant to the trivial string link.
Since concordance of string links implies link-homotopy \cite{Gif,Gol}, 
the two string links $L$ and $L'$ are link-homotopic if and only if  
$L\cdot\overline{L'}$ is link-homotopic to $\1_n$. 
(The result of \cite{Gif,Gol} is given for links in $S^3$. However, it still holds for string
links.)
Corollary~\ref{cor:logvanish2} follows from Theorem \ref{thm:stringlink} and the fact that 
a string-link is link-homotopic to the trivial one if and only if all Milnor link-homotopy 
invariants of the link vanish \cite{HL}. 
\end{proof}

For an $n$-string link $L$ and a sequence $I$ of possibly repeating elements of $\{1,,,.,n\}$, 
we can define a nonrepeated sequence $D(I)$ and a string link $D_I(L)$ with $|D(I)|$ components, 
in a strictly similar way as for links in the introduction. 
By combining Corollary~\ref{cor:logvanish2} and \cite[Proposition~3.3]{yasuharaAGT} we have the following. 
\begin{corollary}
Two string links $L$ and $L'$ cannot be distinguished by Milnor invariants if and only if 
they have same linking numbers and $f_{B_{D(I)}}(D_I(L\cdot\overline{L'}))=0$ for all sequences $I$.  
\end{corollary}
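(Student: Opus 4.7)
The plan is to reduce the corollary, by a standard induction, to showing that all Milnor invariants of $L\cdot\overline{L'}$ vanish, and then to convert each such vanishing condition into the condition $f_{B_{D(I)}}(D_I(L\cdot\overline{L'}))=0$ via Milnor's identity $\mu_{D_I(J)}(D(I))=\mu_J(I)$ from \cite[Thm.~7]{Milnor2} combined with Theorem~\ref{thm:stringlink}.

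First, I would record two elementary compatibility properties of the cabling operation: $D_I(L\cdot L')=D_I(L)\cdot D_I(L')$ and $D_I(\overline{L'})=\overline{D_I(L')}$. Both follow immediately from the definition, since $D_I$ is just the operation of taking parallel copies with a lexicographic relabeling, and this manifestly commutes with stacking and with the horizontal mirror with reversed orientation. Consequently $D_I(L\cdot\overline{L'})=D_I(L)\cdot\overline{D_I(L')}$. Next, using that Milnor invariants of string links of length $k$ are finite type of degree $k-1$, one shows by induction on $k$ that $L$ and $L'$ share all Milnor invariants of length $\le k$ if and only if all Milnor invariants of $L\cdot\overline{L'}$ of length $\le k$ vanish. (This reduction is essentially what \cite[Proposition~3.3]{yasuharaAGT} provides and in any case follows from Milnor's formula for the Magnus expansion of a stacking product.)

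Next, I would prove the two directions by induction on the length of the sequence $I$. The base case $|I|=2$ is the linking-number condition, which appears explicitly in both directions. For the inductive step with $|I|=k+1\ge 3$, assume that all Milnor invariants of $L\cdot\overline{L'}$ of length $\le k$ vanish. The key observation is that for any subsequence $K$ of $D(I)$ there is a corresponding subsequence $I_K$ of $I$ (with $|I_K|=|K|$) such that the sub-string-link of $D_I(L\cdot\overline{L'})$ indexed by $\{K\}$ coincides with $D_{I_K}(L\cdot\overline{L'})$; hence by \cite[Thm.~7]{Milnor2}, $\mu_{D_I(L\cdot\overline{L'})}(K)=\mu_{L\cdot\overline{L'}}(I_K)=0$ for all $K$ of length $\le k$. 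This means $D_I(L\cdot\overline{L'})$ has vanishing Milnor link-homotopy invariants of length up to $k$, so Theorem~\ref{thm:stringlink} applies and gives
\[
\mu_{L\cdot\overline{L'}}(I)\;=\;\mu_{D_I(L\cdot\overline{L'})}(D(I))\;=\;f_{B_{D(I)}}(D_I(L\cdot\overline{L'})),
\]
again using \cite[Thm.~7]{Milnor2} for the first equality. The corollary then follows by closing the induction in both directions.

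The main obstacle is the combinatorial bookkeeping underlying the sub-cabling identity $\mu_{D_I(J)}(K)=\mu_{D_{I_K}(J)}(D(I_K))$, i.e. verifying that every nonrepeated subsequence $K$ of $D(I)$ is of the form $D(I_K)$ for a suitable subsequence $I_K$ of $I$, and that the corresponding sub-link of $D_I(J)$ is literally $D_{I_K}(J)$. This is precisely what allows the inductive hypothesis on $L\cdot\overline{L'}$ to feed the hypothesis of Theorem~\ref{thm:stringlink} for every cabling $D_I(L\cdot\overline{L'})$, and it is the content encoded in \cite[Proposition~3.3]{yasuharaAGT}. Once this is in hand, the only remaining work is the straightforward verification that the induction correctly propagates and that the linking-number clause in the statement matches the $|I|=2$ base case on both sides.
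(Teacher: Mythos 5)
Your argument is correct in substance, but it takes a different route from the paper. The paper's proof is a one-line combination of previously established results: by \cite[Proposition~3.3]{yasuharaAGT}, two string links share all Milnor invariants if and only if all their cablings $D_I$ are link-homotopic, and Corollary~\ref{cor:logvanish2} then converts link-homotopy of each cabling into the linking-number and $f_{B}$ conditions. You bypass both Corollary~\ref{cor:logvanish2} and the external Proposition~3.3: you reduce to the vanishing of all Milnor invariants of $W=L\cdot\overline{L'}$ and run an induction on the length of $I$, feeding Milnor's cabling identity into Theorem~\ref{thm:stringlink} at each stage. This is more self-contained --- in particular it does not route through the Habegger--Lin link-homotopy classification, which sits inside Corollary~\ref{cor:logvanish2} and \cite[Proposition~3.3]{yasuharaAGT} --- at the cost of redoing the cabling bookkeeping that the citation black-boxes.

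One statement in that bookkeeping is not literally true, though the fix is immediate. It is not the case that every nonrepeated sequence $K$ of components of $D_I(W)$ is of the form $D(I_K)$ for a subsequence $I_K$ of $I$: for $I=1122$, with the parallel copies $(1,1),(1,2),(2,1),(2,2)$ labelled $1,2,3,4$, the sequence $K=132$ is not $D(I')$ for any sequence $I'$, since the induced ordering forces $D(112)=123$. What you actually need --- and what the string-link version of \cite[Thm.~7]{Milnor2} gives in full generality --- is the identity $\mu_{D_I(W)}(K)=\mu_W(\pi(K))$ for \emph{every} sequence $K$ of cable components, where $\pi$ replaces each parallel copy by the index of the underlying component of $W$; since $|\pi(K)|=|K|\le k$, the inductive hypothesis then verifies the hypothesis of Theorem~\ref{thm:stringlink} for $D_I(W)$, and your argument goes through unchanged. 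You should also acknowledge that Theorem~7 of \cite{Milnor2} is stated for links with the usual indeterminacy, whereas you use its exact, integer-valued analogue for string links; this version is standard (and is what \cite{yasuharaAGT} relies on), but it deserves a word.
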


\section{Example}\label{sec:ex}

In this last section, we give a simple example illustrating the necessity of our hypothesis in Theorem \ref{thm:main}.  

Consider a link $L$ which is the split union of two positive Hopf links, with components labelled by $1$, $2$ and $3$, $4$ respectively.  Then, for the sequence $I=1324$, $\overline{\mu}_L(I)$ vanishes since $\Delta(I)=1$.  
$$\textrm{\includegraphics{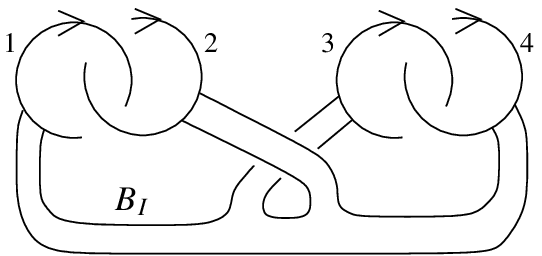}}$$

On the other hand, for the choice of $I$-fusion disk $B_I$ illustrated in the figure, we have 
 $$ 
P_0(L_J) = \left\{
 \begin{array}{ll}
  2t^2-t^4 & \textrm{ if $J=I$, }\\
  1    & \textrm{ if $J\lneq I$. }
 \end{array} \right. $$
Hence we notice that the alternate sum 
$$\sum_{J<I}(-1)^{|J|} (\log P_0(L_J))^{(3)}=\sum_{J<I}(-1)^{|J|} (P_0(L_J))^{(3)}=24$$
is not divisible by $3!2^3=48$.  

This divisibility issue is the main obstruction for our formula to hold in general.

\end{document}